\newcommand{\scirc}	{\raise2pt\hbox{${}_\circ$}}
\newcommand{\A}
           {\widetilde{\mathbb{A}}}
\newcommand{\KA}[2]
           {\widetilde{K}_{{#1},{#2}}^{\mathbb{A}}}
\newcommand{\ka}[2]
           {{K}_{{#1},{#2}}^{\mathbb{A}}}
\newcommand{\KC}[2]
           {\widetilde{K}_{{#1},{#2}}^{\mathbb{C}}}
\newcommand{\cA}[2]
           {{\mathbb{A}}_{{#1},{#2}}}
\newcommand{\T}[2]
           {\widetilde{\mathbb{T}}_{{#1},{#2}}}
\newcommand{\KT}[2]
           {\widetilde{K}_{{#1},{#2}}^{\mathbb{T}}}
\newcommand{\tcA}[2]
           {\widetilde{\mathbb{A}}_{#1,#2}}
\newcommand{\tcC}[2]
           {\widetilde{\mathbb{C}}_{#1,#2}}
\newcommand{\tcT}[2]
           {\widetilde{\mathbb{T}}_{#1,#2}}
\newtheorem{thmalph}{Theorem}
\newtheorem{theorem}{Theorem}[section]
\newtheorem{proposition}[theorem]{Proposition}
\theoremstyle{remark}
\newtheorem{remark}[theorem]{Remark}
\theoremstyle{definition}
\newtheorem{example}[theorem]{Example}
\newtheorem{definition}[theorem]{Definition}
\newtheorem{problem}[theorem]{Problem}
\newtheorem{projectalph}[thmalph]{Project}
\numberwithin{equation}{section}
\def\Sol{{\mathcal {S}}\!{\text{\it{ol}}}}
\begin{document}
\title{
F-method for symmetry breaking operators
\\
\textit{\large 
Dedicated to  Professor Michael Eastwood for his 60th birthday}}
\author{%
 Toshiyuki KOBAYASHI\\[\smallskipamount]
Kavli IPMU (WPI), and Graduate School of Mathematical Sciences \\
the University of Tokyo\\
\normalsize
\textit{E-mail address}: \texttt{toshi@ms.u-tokyo.ac.jp}
}
\date{} 

\maketitle

\begin{abstract}
We provide some insights in the study of branching problems of reductive groups, and a method of investigations into symmetry breaking operators.
First, we give geometric criteria
 for finiteness property of linearly independent
 continuous (respectively, differential)
 operators that intertwine two induced representations
 of reductive Lie groups and their reductive subgroups.  
Second,
 we  extend the \lq{F-method}\rq\
 known for local operators to non-local operators.  
We then illustrate the idea by concrete examples in conformal geometry, 
 and explain how the F-method works for detailed
analysis of symmetry breaking operators,
 e.g.,
 finding functional equations
and explicit residue formulae of \lq regular\rq\ symmetry breaking
 operators  with meromorphic parameters.
\end{abstract}

\medskip
\noindent
\textit{2010 MSC:}
Primary
          22E46; 
Secondary 
33C45, 53C35

\medskip
\noindent
\textit{%
Key words and phrases\/}: 
branching law, reductive Lie group,
symmetry breaking,
parabolic geometry,
conformal geometry,
Verma module, F-method.  

\setcounter{tocdepth}{1}
\tableofcontents

\section{Introduction}

Let $G$ be a real reductive linear Lie group, 
 and $P$ a parabolic subgroup.  
Associated to a finite-dimensional representation 
$\lambda:P \to GL_{\mathbb{C}}(V)$, 
 we define a homogeneous vector bundle
 ${\mathcal{V}}:=G \times_P V$
 over the (generalized) real flag variety
 $X:=G/P$.  
Then the group $G$ acts continuously on the space
 $C^{\infty}(X, {\mathcal{V}})$
 of smooth sections,
 which is endowed 
 with the natural Fr{\'e}chet topology
 of uniform convergence
 of finite derivatives.  

Suppose $G'$ is an (algebraic) reductive subgroup
 of $G$, 
 $P'$ is a parabolic subgroup of $G'$, 
 and $Y:=G'/P'$.  
For a finite-dimensional representation 
 $\nu: P' \to GL_{\mathbb{C}}(W)$,
 we define similarly a homogeneous vector bundle
 ${\mathcal{W}}:=G' \times_{P'}W$
 over $Y$,
 and form a continuous representation
 of $G'$ on $C^{\infty}(Y,{\mathcal{W}})$.  
We denote by $\operatorname{Hom}_{G'}(C^{\infty}(G/P, {\mathcal{V}}),
C^{\infty}(G'/P', {\mathcal{W}}))$
 the space of continuous $G'$-homomorphisms
(\textit{symmetry breaking operators}).

Assume that 
\begin{equation}
\label{eqn:PGP}
P' \subset P \cap G'.   
\end{equation}
Then we have a natural $G'$-equivariant morphism 
 $\iota: Y \to X$.  
With this morphism $\iota$, 
 we can define a continuous linear operator
 $T:C^{\infty}(X,{\mathcal{V}}) \to C^{\infty}(Y,{\mathcal{W}})$
 to be a {\it{differential operator}}
 in a wider sense 
 than the usual 
 by the following local property:
\[
\iota (\operatorname{Supp}(Tf)) \subset \operatorname{Supp}f
\quad
\text{ for any }
f \in C^{\infty}(X,{\mathcal{V}}).  
\]
In the case where $Y=X$ with $\iota$ the identity map, 
 this definition is equivalent to
 that $T$ is a differential operator 
in the classical sense
 by Peetre's theorem.  
We shall write 
 $\operatorname{Diff}_{G'}(C^{\infty}(G/P, {\mathcal{V}}),
C^{\infty}(G'/P', {\mathcal{W}}))$
 for the space of $G'$-intertwining differential operators.  

The object of our study is local and non-local symmetry breaking operators:
\begin{equation}
\label{eqn:DC}
\operatorname{Diff}_{G'}(C^{\infty}(X,{\mathcal{V}}),
 C^{\infty}(Y,{\mathcal{W}}))
\subset 
\operatorname{Hom}_{G'}(C^{\infty}(X,{\mathcal{V}}),
 C^{\infty}(Y,{\mathcal{W}})).  
\end{equation}

We consider the following:
\begin{projectalph}
Construct {\it{explicitly}} differential 
(local)/ continuous (non-local) symmetry breaking operators, 
 and classify them.  
\end{projectalph}

In the setting where $X=Y$ and $G=G'$, 
 the Knapp--Stein intertwining operators 
 are a basic example of non-local intertwining operators.  
On the other hand,
 the existence condition
 for non-zero homomorphisms 
between generalized Verma modules
 has been studied algebraically 
 by many authors
 in various cases
 (see \cite{Ma} and references therein), 
 which is in turn equivalent to the existence condition
 for non-zero local $G$-intertwining operators
 by the duality between Verma modules
 and principal series representations (e.g. \cite{xhaja}). 
However, even in this usual setting,
 it is already a non-trivial matter
 to write down explicitly the operators,
 and a complete classification of such operators
 is far from being solved for reductive groups $G$
 in general.

In the setting that we have in mind,
 namely,
 where $G' \subsetneqq G$, 
 we face with branching problems
 of irreducible representations
 of the group $G$
 when we restrict them 
 to the subgroup $G'$
 (or branching problems for Lie algebras).  
The study of the restriction
 of infinite-dimensional representations
 is difficult in general even as \lq{abstract analysis}\rq, 
involving wild features
 (\cite{xkiai, K12}).  
On the other hand,
 Project A asks further 
\lq{concrete analysis}\rq\ of the restriction.  
Nevertheless,
 there have been recently some explicit results
 \cite{CKOP, Juhl, xkhelgason, KOSS, KP, KS}
 for specific situations 
where $X \neq Y$ and $G \neq G'$, 
 in relation to Project A.

\vskip 1pc
In light of the aforementioned state
 of the art,
 we first wish to clarify
  what are reasonably general settings for Project A
  and what are their limitations,
 not coming from the existing 
 technical difficulties,
 but
 from purely representation theoretic constraints.
For this, 
 we remember
 that the spaces of symmetry breaking operators
are not always finite-dimensional
 if $G \neq G'$,
 and consequently,
 the subgroup $G'$ may lose a good control
 of the irreducible $G$-module (\cite{xkiai}).
Thus we think that Project A should be built
 on a solid foundation
 where the spaces of local/non-local symmetry breaking operators
 are at most finite-dimensional, 
and preferably, 
 of uniformly bounded dimensions, 
 or even one-dimensional.
In short, we pose:
\begin{projectalph}
Single out appropriate settings
 in which Project A makes sense.  
\end{projectalph}

In Section \ref{sec:finitethm}, 
 we discuss Project B by applying recent progress
on the theory of restrictions of representations
 \cite{K08,K12, mfbundl, xktoshima},
 and provide concrete geometric conditions
 that assure
 the spaces in \eqref{eqn:DC}
 to be finite-dimensional, 
 of uniformly bounded dimensions,
 or at most one-dimensional (multiplicity-free restrictions).

In Sections \ref{sec:Fmethod} to \ref{sec:4}
 we develop a method of Project A by 
  extending the idea of the \lq F-method\rq\ 
 studied earlier for local operators 
 (\cite{xkhelgason, KOSS, KP})
 to non-local operators.

\section{Finiteness theorems of restriction maps}
\label{sec:finitethm}

This section is devoted to Project B.

We begin with the most general case,
 namely,
 the case
 where $P$ and $P'$
 are minimal parabolic subgroups
 of $G$ and $G'$, 
respectively.  
In this case,  the assumption \eqref{eqn:PGP} is automatically satisfied
 (with $P$ replaced by its conjugate,
 if necessary).  
Under the condition, we give finiteness criteria
 for local and non-local operators:
\begin{theorem}
[local operators]
\label{thm:fmdiff}
Assume $\operatorname{rank}_{{\mathbb{R}}}G
=\operatorname{rank}_{{\mathbb{R}}}G'$.  
Then
\begin{equation}
\label{eqn:fmdiff}
\dim \operatorname{Diff}_{G'}
(C^{\infty}(X, {\mathcal{V}}), 
C^{\infty}(Y, {\mathcal{W}}))<\infty
\end{equation}
for all finite-dimensional representations
 $V$ of $P$ and $W$ of $P'$.  
\end{theorem}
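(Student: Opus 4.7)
The plan is to exploit the classical duality between differential symmetry breaking operators and homomorphisms between generalized Verma modules, and then to use the equal-rank hypothesis to impose enough weight constraints to force finiteness.

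First, I would invoke the duality
\[
  \operatorname{Diff}_{G'}\bigl(C^{\infty}(X,\mathcal{V}),C^{\infty}(Y,\mathcal{W})\bigr)
  \;\cong\;
  \operatorname{Hom}_{(\mathfrak{g}',P')}\bigl(M_{\mathfrak{p}'}^{\mathfrak{g}'}(W^{\vee}),\,M_{\mathfrak{p}}^{\mathfrak{g}}(V^{\vee})\bigr),
\]
where $M_{\mathfrak{q}}^{\mathfrak{h}}(U):=U(\mathfrak{h})\otimes_{U(\mathfrak{q})}U$ is the generalized Verma module and $M_{\mathfrak{p}}^{\mathfrak{g}}(V^{\vee})$ is regarded as a $(\mathfrak{g}',P')$-module by restriction (modular twists by $\rho$'s are suppressed as they do not affect finiteness). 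Since $W^{\vee}$ generates $M_{\mathfrak{p}'}^{\mathfrak{g}'}(W^{\vee})$ and is annihilated by $\mathfrak{n}'$, Frobenius-type reciprocity gives
\[
  \operatorname{Hom}_{(\mathfrak{g}',P')}\bigl(M_{\mathfrak{p}'}^{\mathfrak{g}'}(W^{\vee}),\,M_{\mathfrak{p}}^{\mathfrak{g}}(V^{\vee})\bigr)
  \;\cong\;
  \operatorname{Hom}_{P'}\bigl(W^{\vee},\,M_{\mathfrak{p}}^{\mathfrak{g}}(V^{\vee})^{\mathfrak{n}'}\bigr).
\]

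Next I would use $\operatorname{rank}_{\mathbb{R}}G=\operatorname{rank}_{\mathbb{R}}G'$. Any maximal $\mathbb{R}$-split torus $A'$ of $G'$ is an $\mathbb{R}$-split torus of $G$ of the same dimension as $A\subset P=MAN$, so after replacing $P$ by a conjugate compatible with \eqref{eqn:PGP} I may arrange $A'=A$. Because $A=A'$ is central in the Levi $L'=M'A'$, any $P'$-homomorphism in the space above preserves $A$-weights. The PBW isomorphism $M_{\mathfrak{p}}^{\mathfrak{g}}(V^{\vee})\cong U(\bar{\mathfrak{n}})\otimes V^{\vee}$ as $A$-modules then shows that each $A$-weight space is finite-dimensional: the $A$-weights of $\bar{\mathfrak{n}}$ are the restricted negative roots and so lie in a strict open half-space of $\mathfrak{a}^{*}$, forcing each weight of $U(\bar{\mathfrak{n}})$ to admit only finitely many PBW expressions. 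Since $W^{\vee}$ carries only finitely many $A$-weights, the ambient space of weight-preserving linear maps $W^{\vee}\to M_{\mathfrak{p}}^{\mathfrak{g}}(V^{\vee})$ is already finite-dimensional, whence \eqref{eqn:fmdiff} follows.

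The main obstacle I expect is the rigorous formulation of the duality in the first step when $X\ne Y$: identifying differential operators between sections of bundles over \emph{different} flag varieties $G/P$ and $G'/P'$ with $(\mathfrak{g}',P')$-homomorphisms of generalized Verma modules requires a careful choice of normalizations (modular characters, restriction functors), and is where the F-method perspective of the paper presumably enters. Once this duality is set up, the equal-rank hypothesis is used in exactly the place where it is indispensable: without it, the projection $A\to A'$ would collapse the strict half-space spanned by negative restricted roots, and the $A'$-weight spaces of $M_{\mathfrak{p}}^{\mathfrak{g}}(V^{\vee})$ could fail to be finite-dimensional, destroying the argument.
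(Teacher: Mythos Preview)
Your proposal is correct and follows essentially the same route as the paper: the paper invokes the same duality with generalized Verma modules (quoting \cite[Corollary~2.9]{KP}), then uses the equal-rank hypothesis to realize the minimal parabolic $\mathfrak{p}$ as $\mathfrak{p}(H)$ for a generic hyperbolic $H\in\mathfrak{g}'_{\mathbb R}$ (its notion of ``$\mathfrak{g}'$-compatibility''), which is exactly your $A=A'$. The $A$-weight-space finiteness argument you spell out is the content of \cite[Theorem~3.10]{K12}, which the paper simply cites; your worry about the duality for $X\neq Y$ is precisely what \cite{KP} supplies, and no further F-method input is needed for this step.
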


\begin{remark}
We give a proof of Theorem~\ref{thm:fmdiff}
 for more general parabolic subgroups $P$ and $P'$,
 see Theorem~\ref{thm:finVerma} below.  
We note that the real rank assumption of Theorem~\ref{thm:fmdiff}
 is relaxed
in Theorem~\ref{thm:finVerma}.  
\end{remark}

\begin{theorem}
[non-local operators] 
\label{thm:fimHom}
{\rm{1)}}\enspace
{\rm{(finite multiplicity)}}\enspace
The following two conditions
 on the pair $(G,G')$ of real reductive groups
 are equivalent:
\begin{enumerate}
\item[{\rm{(i)}}]
For all finite-dimensional representations
 $V$ of $P$ and $W$ of $P'$, 
\begin{equation}
\label{eqn:fmHom}
\dim \operatorname{Hom}_{G'}(C^{\infty}(G/P, {\mathcal{V}}), 
C^{\infty}(G'/P', {\mathcal{W}}))
 < \infty.  
\end{equation}
\item[{\rm{(ii)}}]
There exists an open $P'$-orbit
 on the (generalized) real flag variety $G/P$.  
\end{enumerate}
\par\noindent
{\rm{2)}}\enspace
{\rm{(uniformly bounded multiplicity)}}\enspace 
Let $G$ be a simple Lie group.  
We write ${\mathfrak {g}}$ and ${\mathfrak {g}}'$
 for the complexifications
 of the Lie algebras
 ${\mathfrak {g}}_{\mathbb{R}}$ and ${\mathfrak {g}}_{\mathbb{R}}'$
 of $G$ and $G'$, 
 respectively.  
Then the following conditions 
 on the pair $(G,G')$ are equivalent:
\begin{enumerate}
\item[{\rm{(i)}}]
\begin{equation}
\label{eqn:ssVW}
\sup_V \sup_W \dim 
\operatorname{Hom}_{G'}(C^{\infty}(X,{\mathcal{V}}),
 C^{\infty}(Y,{\mathcal{W}}))
<\infty,
\end{equation}
where the supremum is taken over all finite-dimensional, 
 irreducible representations
 $V$ of $P$ and $W$ of $P'$,
 respectively.  
\item[{\rm{(ii)}}]
There exists an open $B'$-orbit on the complex flag variety
 $G_\mathbb C/B$, 
 where $B$ and $B'$ are Borel subgroups of complexifications $G_\mathbb C$
 and $G'_\mathbb C$ of $G$ and $G'$
 respectively.
\item[{\rm{(iii)}}]
{\rm{(strong Gelfand pair)}}\enspace
The pair $({\mathfrak {g}},{\mathfrak {g}}')$
 is one of 
 $({\mathfrak {sl}}(n+1,{\mathbb{C}}),
   {\mathfrak {gl}}(n,{\mathbb{C}}))$,
$({\mathfrak {sl}}(n+1,{\mathbb{C}}),
{\mathfrak {sl}}(n,{\mathbb{C}}))$ ($n \neq 1$),
$({\mathfrak {so}}(n+1,{\mathbb{C}}),
{\mathfrak {so}}(n,{\mathbb{C}}))$,
or 
${\mathfrak {g}}={\mathfrak {g}}'$.  
\end{enumerate}
\end{theorem}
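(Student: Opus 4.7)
The plan is to reformulate the Hom spaces via distributional Schwartz kernels and then to apply the machinery of real/complex spherical varieties. First I would invoke the Schwartz kernel theorem to identify any continuous $G'$-intertwiner $T\colon C^\infty(X,{\mathcal V}) \to C^\infty(Y,{\mathcal W})$ with a $G'$-invariant distributional section on $Y\times X$ taking values in ${\mathcal W}\boxtimes {\mathcal V}^*$ (twisted by the appropriate density bundle). Restricting to the base point of $Y=G'/P'$ and using $G'$-equivariance should produce an isomorphism
\[
\Hom_{G'}\!\bigl(C^\infty(X,{\mathcal V}),\,C^\infty(Y,{\mathcal W})\bigr)
\;\cong\;
\bigl({\mathcal D}'(X,{\mathcal V}^*)\otimes W\bigr)^{P'},
\]
so that the analytic problem becomes a problem about $P'$-equivariant distributions on the real flag variety $X=G/P$.

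For part 1, the equivalence (i)$\Leftrightarrow$(ii) then reduces to a finiteness criterion for such spaces of equivariant distributions. The direction (ii)$\Rightarrow$(i) would proceed by observing that once there is an open $P'$-orbit, the total number of $P'$-orbits on $X$ is finite: the boundary of the open orbit is a proper real algebraic $P'$-invariant subvariety, and descending induction on dimension yields a finite stratification. On each stratum, the space of $P'$-equivariant distributions with values in a finite-dimensional module is finite-dimensional by a transverse symbol analysis (normal-derivative expansions controlled by the finite-dimensional isotropy representations), and patching across strata gives a global finiteness bound depending on $V$ and $W$ but always finite. The converse (i)$\Rightarrow$(ii) I would prove by contrapositive: if every $P'$-orbit has positive codimension in $X$, then one can produce, for a suitable choice of $V$ and $W$, an infinite-dimensional family of linearly independent invariant distributions (parametrized by a transverse slice or a continuous invariant), violating \eqref{eqn:fmHom}. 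Both directions rely on the real-spherical theory of \cite{xktoshima, mfbundl}, which I would quote.

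For part 2, the same kernel reformulation applies, but now uniformity in $V$ and $W$ is required. The equivalence (i)$\Leftrightarrow$(ii) is governed by the complexified geometry: the existence of an open $B'$-orbit on $G_{\mathbb C}/B$ is the correct \emph{complex sphericity} condition, and by a holomorphic-extension/leading-term argument on the kernels, it is equivalent to a uniform bound on the multiplicities of finite-dimensional representations in the relevant covariant distributional models, and hence on the dimension of the symmetry breaking spaces as $V,W$ vary over irreducibles. Failure of complex sphericity, on the other hand, allows one to construct a sequence of irreducibles with unbounded multiplicities by varying suitable highest weights.

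Finally, (ii)$\Leftrightarrow$(iii) is the classification step: complex pairs $(\mathfrak g,\mathfrak g')$ such that $G'_{\mathbb C}$ acts on $G_{\mathbb C}/B$ with an open orbit (equivalently, such that $G_{\mathbb C}/G'_{\mathbb C}$ is $B$-spherical) are classified by Kr\"amer-type tables; under the simplicity assumption on $G$, the list collapses to precisely the three infinite families listed plus the trivial case $\mathfrak g=\mathfrak g'$. The main obstacle I expect is the hard direction of part 1, namely deducing uniform finiteness from the existence of an open $P'$-orbit: this requires a careful real-algebraic stratification of $X$ together with precise control of the normal (jet) data at each stratum, and it is the technical heart of the Kobayashi--Oshima theory. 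By comparison, the Schwartz kernel reduction is essentially formal, and the classification in (iii) is a finite, combinatorial verification from Kr\"amer's tables.
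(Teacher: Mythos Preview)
Your global strategy---reduce to $P'$-equivariant distribution sections on $G/P$ via the Schwartz kernel theorem, then invoke \cite{xktoshima} for the finiteness criterion and Kr\"amer's tables \cite{xkramer} for the classification (ii)$\Leftrightarrow$(iii)---is the same scaffolding the paper uses. The kernel identification you write is exactly Proposition~\ref{prop:KT}, and the paper likewise treats both directions of (1) and (2) as consequences of \cite[Theorems~A, B and Theorem~3.1]{xktoshima}.

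Where your sketch diverges from the paper is in your description of the mechanism behind (i)$\Rightarrow$(ii), and here there is a genuine gap. You propose a contrapositive: absent an open $P'$-orbit, build by hand an infinite family of invariant distributions ``parametrized by a transverse slice or a continuous invariant.'' This is not how the result is actually obtained in \cite{xktoshima}, and it is not clear your construction can be carried out. Even when no orbit is open there may still be only finitely many orbits, and conversely a continuum of orbits does not by itself produce $P'$-equivariant distributions valued in a \emph{fixed} finite-dimensional $W$; one has to exhibit specific $V,W$ and specific distributions, which is delicate. The paper's route is quite different and much deeper: it passes through a generalized Poisson transform \cite[Theorem~3.1]{xktoshima} to realize principal series inside function spaces on the Riemannian symmetric space, and then uses the Kashiwara--Oshima theory of PDE with regular singularities and hyperfunction boundary values \cite{xkaos}, together with a compactification of the group manifold with normal-crossing boundaries. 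That analytic machinery is what forces the open-orbit condition from finite multiplicity. Relatedly, you have the difficulty of the two directions reversed: (ii)$\Rightarrow$(i) is the comparatively routine Bruhat-filtration argument (your stratification sketch is in the right spirit, though you should say why only finitely many normal jets survive---this uses the split-torus weights in $L'$), whereas (i)$\Rightarrow$(ii) is the hard direction requiring the hyperfunction input.

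For part~(2), note also the paper's alternative route for (ii)$\Rightarrow$(i): combine the Sun--Zhu multiplicity-one theorem \cite{xsunzhu} with the classification of real forms of complex strong Gelfand pairs; this bypasses the holomorphic-extension argument you outline.
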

Theorem \ref{thm:fimHom} (1) suggests
 that the orbit structure $P'\backslash G/P$ 
 is crucial 
 to understand $\operatorname{Hom}_{G'}
(C^{\infty}(X, {\mathcal{V}}), C^{\infty}(Y, {\mathcal{W}}))$.  
In fact,
 the \lq{regular}\rq\ symmetry breaking operators
 are built on the integral transform attached to open $P'$-orbits
 on $G/P$ \cite{KS},
whereas the closed $P'$-orbit through the origin
 $o=eP/P \in G/P$ is
 the support of the distribution kernel 
 (see Proposition \ref{prop:KT} below)
of differential symmetry
breaking operators \cite{KP}.
The whole orbit structure plays a basic role in
 the classification of all symmetry breaking operators
 in \cite{KS}
 in the setting that we discuss in Section \ref{sec:4}.

A remarkable feature
 of Theorem \ref{thm:fimHom} (2)
 is that the equivalent conditions 
 do not depend on the real form 
 $({\mathfrak{g}}_{\mathbb{R}}, {\mathfrak{g}}_{\mathbb{R}}')$
 but depend only on the complexification
 $({\mathfrak {g}}, {\mathfrak {g}}')$,
 which are obvious from (ii) or (iii)
 but are quite non-trivial from (i).  
On the other hand,
 the equivalent conditions 
 in Theorem \ref{thm:fimHom} (1)
  depend heavily on the real form
 $({\mathfrak {g}}_{\mathbb{R}}, {\mathfrak {g}}_{\mathbb{R}}')$.

Comparing the criteria given in Theorems \ref{thm:fmdiff} and \ref{thm:fimHom},
 we find that the space
 of non-local intertwining operators
 (i.e. the right-hand side of \eqref{eqn:DC})
 are generally much larger
 than that of local operators
 (i.e. the left-hand side of \eqref{eqn:DC}).  
Thus
 \lq{appropriate settings}\rq\
 in Project B will be different
 for local and non-local operators:
\begin{example}
Suppose $(G,G')
=(GL(p+q,{\mathbb{R}}), GL(p,{\mathbb{R}})\times GL(q,{\mathbb{R}}))$.  
Then the finite-dimensionality \eqref{eqn:fmdiff} of local operators
 holds for all $p, q$, 
 whereas the finite-dimensionality \eqref{eqn:fmHom}
 of non-local operators
 fails if $p \ge 2$ or $q \ge 2$.  
Likewise, for the non-symmetric pair
$$
 (G,G')
 =(GL(p+q+r,{\mathbb{R}}), GL(p,{\mathbb{R}})\times GL(q,{\mathbb{R}})
 \times GL(r, \mathbb R)),
$$
the finite-dimensionality \eqref{eqn:fmdiff} still holds for all $p,q, r>0$, 
 whereas \eqref{eqn:fmHom} fails for any $p, q, r>0$.  
\end{example}
\begin{example}
[group case]
\label{ex:2.5}
Let $(G,G')
=(H \times H, \operatorname{diag}H)$
 with $H = O(p,q)$.  
Then the condition (ii) of Theorem \ref{thm:fimHom} (1) holds
 if and only if 
 $\min(p,q) \le 1$.  
Again,
 \eqref{eqn:fmdiff} always holds 
 but \eqref{eqn:fmHom} fails
 if $\min(p,q) \ge 2$.  
Symmetry breaking operators
 for the pair $(H \times H, \operatorname{diag}H)$
 correspond
 to invariant trilinear forms
 of representations of $H$, 
 for which concrete analysis
 was studied, 
 e.g., 
 in \cite{CKOP}
 in the case $H=O(p,1)$.
\end{example}
In a subsequent paper \cite{xKMt},
we shall give a complete classification on the level of the Lie 
algebras of the reductive symmetric pairs 
$(G, G')$  that satisfy  the equivalent conditions
 of Theorem \ref{thm:fimHom} (1). 
(The classification was given earlier
 in \cite{Ksuron}
 in the setting 
 where $(G,G')$ is of the form 
 $(H \times H, \operatorname{diag}H)$, 
 and Example \ref{ex:2.5} is essentially
 the unique example
 satisfying the equivalence conditions
 in this case.)

\vskip 1pc
 Here are some comments
 on the proof of Theorem \ref{thm:fimHom}.  
The implication (ii) $\Rightarrow$ (i)
 in both (1) and (2) of Theorem \ref{thm:fimHom}
 is a direct consequence of 
  the main theorems of \cite[Theorems A and B]{xktoshima}
  which are stated in a more general setting.  
The converse implication
 is proved 
 by using a generalized Poisson transform
 \cite[Theorem 3.1]{xktoshima}.  
The method is based on 
the theory of system of partial differential equations
 with regular singularities and hyperfunction boundary values, 
  developed  by M.~Sato, M.~Kashiwara, T.~Kawai, and T.~Oshima \cite{xkaos}
 and the compactification of the group manifold
 with normal-crossing boundaries.  
Alternatively, 
 the implication (ii) to (i)
 in Theorem \ref{thm:fimHom} (2) could be derived 
  from a recent multiplicity-free theorem 
\cite{xsunzhu}
 and from the classification
 of real forms
 of (complex) strongly Gelfand pairs
 \cite{xkramer}.  
We note 
 that the proof in \cite{xktoshima} does not use any case-by-case argument.

\medskip

We give a proof of Theorem \ref{thm:fmdiff}
 in a more general form below.  
Let us fix some notation.  
A semisimple element $H$ 
 of a complex reductive Lie algebra ${\mathfrak {g}}$
 is said to be {\it{hyperbolic}}
 if the eigenvalues
 of $\operatorname{ad}(H)
 \in {\operatorname{End}}_{\mathbb{C}}({\mathfrak {g}})$
 are all real.  
Given a hyperbolic element $H$, 
 we define subalgebras
 of ${\mathfrak {g}}$ by 
\[
{\mathfrak{n}}^+ \equiv{\mathfrak{n}}^+(H), 
\quad
{\mathfrak{l}} \equiv{\mathfrak{l}}(H),
\quad
 {\mathfrak{n}}^- \equiv{\mathfrak{n}}^-(H)
\]
to be the sum of the eigenspaces
 with positive, zero, and negative eigenvalues,
 respectively.  
Then 
$
{\mathfrak{p}}(H):={\mathfrak{l}}(H)
                   +{\mathfrak{n}}^+(H)
$
 is a parabolic subalgebra
 of ${\mathfrak{g}}$.  
Let ${\mathfrak {g}}_{\mathbb{R}}$ be the Lie algebra
 of a real reductive Lie group $G$, 
 and ${\mathfrak {g}}:={\mathfrak {g}}_{\mathbb{R}} \otimes
_{\mathbb{R}}{\mathbb{C}}$.  
If $H$ is a hyperbolic element
 of ${\mathfrak{g}}_{\mathbb{R}}$, 
 then 
\[
     P(H):=\{g \in G:
\operatorname{Ad}(g) {\mathfrak{p}}(H)
={\mathfrak{p}}(H)\}
 =L(H) \exp ({\mathfrak {n}}^+(H))
\]
 is a parabolic subgroup
 of $G$, 
 and ${\mathfrak{p}}_{\mathbb{R}}(H):=
 {\mathfrak{p}}(H) \cap {\mathfrak{g}}_{\mathbb{R}}$
 is its Lie algebra.  

We define the following subset 
 of ${\mathfrak{g}}_{\mathbb{R}}$
 by 
\begin{equation}
\label{eqn:reg}
{\mathfrak {g}}_{\mathbb{R}}^{\operatorname{reg, hyp}}
:=
\{H \in {\mathfrak {g}}_{\mathbb{R}}
:
H \text{ is hyperbolic, 
 and $L(H)$ is amenable} \}.  
\end{equation}
For a hyperbolic element $H$ in ${\mathfrak {g}}_{\mathbb{R}}$,
 ${\mathfrak {p}}_{\mathbb{R}}(H)$ is
 a minimal parabolic subalgebra of ${\mathfrak {g}}_{\mathbb{R}}$
 if and only if
 $H \in {\mathfrak {g}}_{\mathbb{R}}^{\operatorname{reg, hyp}}$
 by definition.
\begin{definition}
[${\mathfrak {g}}'$-compatible parabolic subalgebra]
\label{def:2.6}
We say a parabolic subalgebra ${\mathfrak{p}}$ 
 of ${\mathfrak{g}}$ is ${\mathfrak{g}}'$-{\it{compatible}}
 if there exists a hyperbolic element $H$
 in ${\mathfrak{g}}'$
 such that ${\mathfrak{p}} \equiv {\mathfrak{p}}(H)$.   
We say $P$ is $G'$-compatible
 if we can take $H$ in ${\mathfrak {g}}_{\mathbb{R}}'$.  
\end{definition}
If $P$ 
 is $G'$-compatible,
 then ${\mathfrak{p}}':={\mathfrak{p}} \cap {\mathfrak{g}}'$
 becomes a parabolic subalgebra
 of ${\mathfrak{g}}'$
 with Levi decomposition 
\[
{\mathfrak{p}}'
=
{\mathfrak {l}}'+({\mathfrak {n}}^+)'
\equiv
({\mathfrak{l}} \cap {\mathfrak{g}}')
 +({\mathfrak{n}}^+ \cap {\mathfrak{g}}')
\]
 and $P':= P \cap G'$
 becomes a parabolic subgroup of $G'$.  
The ${\mathfrak {g}}'$-compatibility
 is a sufficient condition 
 for the \lq{discrete decomposability}\rq\
 of generalized Verma modules
 $U({\mathfrak {g}}) \otimes_{U({\mathfrak {p}})} F$
 when restricted to the reductive subalgebra ${\mathfrak {g}}'$
 (see \cite{K12}).  

\vskip 1pc
By abuse of notation,
 we shall write also ${\mathcal{L}}_{\lambda} \to X$
 for the line bundle
 associated to $(\lambda, V)$
 instead of the previous notation
 ${\mathcal{V}} \to X$
 if $V$ is one-dimensional.  
We write $\lambda \gg 0$
 if $\langle d \lambda, \alpha \rangle \gg 0$
 for all $\alpha \in \Delta({\mathfrak {n}}^+, {\mathfrak {j}})$
 where ${\mathfrak {j}}$ is a Cartan subalgebra
 of ${\mathfrak {l}}$.  
\begin{theorem}
[local operators]
\label{thm:finVerma}
Let ${\mathfrak{g}}'$ be a reductive subalgebra 
 of ${\mathfrak{g}}$.  
Suppose ${\mathfrak{p}}={\mathfrak{l}} + {\mathfrak{n}}^+$
 is ${\mathfrak{g}}'$-compatible.  
\par\noindent
{\rm{1)}}\enspace
{\rm{(finite multiplicity)}}
\enspace
For any finite-dimensional representations $V$
 and $W$ of the parabolic subgroups $P$ and $P'$, 
 respectively,
 we have
\[
\dim \operatorname{Diff}_{G'}(C^{\infty}(X, {\mathcal{V}}), 
C^{\infty}(Y, {\mathcal{W}}))
 < \infty.  
\]
\par\noindent
{\rm{2)}}\enspace
{\rm{(uniformly bounded multiplicity)}}\enspace
If $({\mathfrak{g}},{\mathfrak{g}}')$ is a reductive symmetric pair
 and ${\mathfrak{n}}^+$ is abelian,
 then 
\[
\sup_W \dim 
\operatorname{Diff}_{G'}(C^{\infty}(X,{\mathcal{L}}_{\lambda}),
 C^{\infty}(Y,{\mathcal{W}}))
 =1, 
\]
for any one-dimensional representation 
${\mathbb{C}}_{\lambda}$ of $P$
 with $\lambda \gg 0$.  
Here $W$ runs over all finite-dimensional irreducible 
 representations of $P'$.  
\end{theorem}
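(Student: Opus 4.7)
The strategy is to apply the algebraic duality underlying the F-method, which identifies the space of differential symmetry breaking operators with a Hom-space of generalized Verma modules. Since $\mathfrak{p}$ is $\mathfrak{g}'$-compatible, the inclusion $\mathfrak{p}'\subset\mathfrak{p}\cap\mathfrak{g}'$ holds by definition, so we have a natural isomorphism
\[
\operatorname{Diff}_{G'}\bigl(C^{\infty}(X,\mathcal{V}),\, C^{\infty}(Y,\mathcal{W})\bigr) \;\cong\; \operatorname{Hom}_{\mathfrak{g}',P'}\!\bigl(M_{\mathfrak{g}'}^{\mathfrak{p}'}(W^{\vee}),\; M_{\mathfrak{g}}^{\mathfrak{p}}(V^{\vee})\bigr),
\]
where $M_{\mathfrak{g}}^{\mathfrak{p}}(E):=U(\mathfrak{g})\otimes_{U(\mathfrak{p})}E$ denotes the generalized Verma module (cf.\ \cite{xhaja, KP}). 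Both parts of the theorem thereby reduce to bounding the dimension of a Hom-space of generalized Verma modules.

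For part (1), first apply Frobenius reciprocity together with the fact that $W^{\vee}$ is annihilated by $(\mathfrak{n}^+)'$ to rewrite
\[
\operatorname{Hom}_{\mathfrak{g}',P'}\!\bigl(M_{\mathfrak{g}'}^{\mathfrak{p}'}(W^{\vee}),\, M_{\mathfrak{g}}^{\mathfrak{p}}(V^{\vee})\bigr) \;\cong\; \operatorname{Hom}_{\mathfrak{l}',L'}\!\bigl(W^{\vee},\; M_{\mathfrak{g}}^{\mathfrak{p}}(V^{\vee})^{(\mathfrak{n}^+)'}\bigr).
\]
I would then exploit the $\mathfrak{g}'$-compatibility directly: by Definition \ref{def:2.6} there is a hyperbolic element $H\in\mathfrak{g}'\cap\mathfrak{l}$ with $\mathfrak{p}=\mathfrak{p}(H)$. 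Under the PBW isomorphism $M_{\mathfrak{g}}^{\mathfrak{p}}(V^{\vee})\cong U(\mathfrak{n}^-)\otimes V^{\vee}$, the operator $\operatorname{ad}(H)$ defines a grading with finite-dimensional graded pieces, and since $H\in\mathfrak{g}'$ this grading is preserved by the $\mathfrak{l}'$-action and descends to the $(\mathfrak{n}^+)'$-invariants. Any $\mathfrak{l}'$-equivariant map from the finite-dimensional $W^{\vee}$, on which $H$ has finite spectrum, must land in a finite direct sum of finite-dimensional graded pieces, giving the desired finiteness. Equivalently, this step can be repackaged as an application of the discrete decomposability theorem of \cite{K12}, which asserts exactly that $\mathfrak{g}'$-compatibility of $\mathfrak{p}$ implies $M_{\mathfrak{g}}^{\mathfrak{p}}(V^{\vee})|_{\mathfrak{g}'}$ is discretely decomposable with finite-dimensional isotypic components.

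For part (2), the extra hypotheses---that $(\mathfrak{g},\mathfrak{g}')$ is a reductive symmetric pair and $\mathfrak{n}^+$ is abelian---place us in the \emph{parabolic Hermitian} setting, where the branching law is much finer. The plan is to invoke a multiplicity-free branching of the form
\[
M_{\mathfrak{g}}^{\mathfrak{p}}(\mathbb{C}_\lambda)\bigm|_{\mathfrak{g}'} \;\cong\; \bigoplus_{W}\, M_{\mathfrak{g}'}^{\mathfrak{p}'}(W^{\vee}) \qquad (\lambda\gg 0),
\]
the sum running over a combinatorial set of irreducible finite-dimensional $L'$-modules $W$ and each summand appearing with multiplicity one; this is the algebraic incarnation of the Hua--Kostant--Schmid / Jakobsen--Vergne decomposition and its extensions to general symmetric pairs with abelian nilradical. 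Combined with the duality this forces $\dim\operatorname{Diff}_{G'}(C^{\infty}(X,\mathcal{L}_\lambda), C^{\infty}(Y,\mathcal{W}))\in\{0,1\}$ for every irreducible $W$, while the non-emptiness of the decomposition yields that the supremum is exactly $1$. The main obstacle lies precisely in establishing this multiplicity-free branching: it rests either on the existence of an $(L'_{\mathbb{C}}\cap B)$-open orbit on the abelian nilradical $\mathfrak{n}^+$, a visible-action / prehomogeneous phenomenon specific to symmetric pairs with abelian nilradical, or, equivalently, on multiplicity-free theorems for unitary highest weight modules; once granted, the translation to differential symmetry breaking operators via the F-method duality is formal.
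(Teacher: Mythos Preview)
Your proposal is correct and follows essentially the same route as the paper: reduce via the duality of \cite{KP} to $\operatorname{Hom}_{\mathfrak{g}'}$-spaces between generalized Verma modules, and then invoke the results of \cite{K12}. The paper's proof simply cites \cite[Theorem~3.10]{K12} for part~(1) and \cite[Theorem~5.1]{K12} for part~(2), whereas you have unpacked the arguments behind those citations---the $H$-grading with finite-dimensional pieces for finiteness, and the multiplicity-free branching via visible actions on the abelian nilradical for the multiplicity-one statement---and your sketches are faithful to what is proved there.
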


\begin{proof}
1)\enspace
The classical duality
 between Verma modules
 and principal series representations
 (e.g. \cite{xhaja})
 can be extended 
 to the context
 of the restriction
 for reductive groups $G\downarrow G'$, 
 and the following bijection holds
 (see \cite[Corollary 2.9]{KP}):
\[
\operatorname{Hom}_{({\mathfrak{g}}',P')}
  (U({\mathfrak{g}}') \otimes_{U({\mathfrak{p}}')}W^{\vee},
   U({\mathfrak{g}}) \otimes_{U({\mathfrak{p}})}V^{\vee})
\simeq
\operatorname{Diff}_{G'}(C^{\infty}(G/P, {\mathcal{V}}), 
C^{\infty}(G'/P', {\mathcal{W}})).  
\]
Here $(\lambda^{\vee}, V^{\vee})$
 denotes the contragredient representation 
 of $(\lambda,V)$.  
Then the proof
 of Theorem \ref{thm:finVerma}
 is reduced to the next proposition.  
\end{proof}

\begin{proposition}
Let ${\mathfrak {g}}'$ be a reductive subalgebra
 of ${\mathfrak {g}}$.  
Suppose that ${\mathfrak {p}}={\mathfrak {l}}+{\mathfrak {n}}^+$
 is ${\mathfrak {g}}'$-compatible.  
\par\noindent
{\rm{1)}}\enspace
For any finite-dimensional ${\mathfrak{p}}$-module $F$ 
 and ${\mathfrak{p}}'$-module $F'$, 
\[
  \dim \operatorname{Hom}_{{\mathfrak{g}}'}
  (U({\mathfrak{g}}') \otimes_{U({\mathfrak{p}}')}F',
   U({\mathfrak{g}}) \otimes_{U({\mathfrak{p}})}F)
  <\infty.  
\] 
\par\noindent
{\rm{2)}}\enspace
If $({\mathfrak {g}}, {\mathfrak {g}}')$ is a symmetric pair
 and ${\mathfrak {n}}^+$ is abelian,
 then 
\[
\sup_{F'} \dim 
\operatorname{Hom}_{{\mathfrak {g}}'}
(U({\mathfrak {g}}') \otimes_{U({\mathfrak {p}}')}F',
U({\mathfrak {g}}) \otimes_{U({\mathfrak {p}})} {\mathbb{C}}_{\lambda})
=1
\]
for any one-dimensional representation ${\mathbb{C}}_{\lambda}$ 
 of ${\mathfrak {p}}$
 with $\lambda \ll 0$.  
Here the supremum is taken over all finite-dimensional
 simple ${\mathfrak {p}}'$-modules $F'$.  
\end{proposition}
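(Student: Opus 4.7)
The plan is to combine Frobenius reciprocity with the grading produced by the hyperbolic element $H$ for part (1), and to combine the involution of the symmetric pair with classical multiplicity-free decompositions for part (2).

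For (1), I would use the natural adjunction
\[
\operatorname{Hom}_{{\mathfrak{g}}'}(U({\mathfrak{g}}') \otimes_{U({\mathfrak{p}}')} F',\, M)
\;\cong\;
\operatorname{Hom}_{{\mathfrak{p}}'}(F',\, M)
\]
with $M := U({\mathfrak{g}}) \otimes_{U({\mathfrak{p}})} F$. The ${\mathfrak{g}}'$-compatibility furnishes a hyperbolic element $H \in {\mathfrak{g}}'$ with ${\mathfrak{p}} = {\mathfrak{p}}(H)$. By PBW, $M \cong S({\mathfrak{n}}^-) \otimes F$ as an ${\mathfrak{l}}$-module, and since $\operatorname{ad}(H)$ is strictly negative on ${\mathfrak{n}}^-$ and zero on ${\mathfrak{l}}$, the element $H$ acts on $M$ with weights bounded above, tending to $-\infty$, and with finite-dimensional weight spaces. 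Because $H \in {\mathfrak{l}}' \subset {\mathfrak{p}}'$, any ${\mathfrak{p}}'$-homomorphism from the finite-dimensional $F'$ into $M$ must preserve $H$-eigenvalues, so its image lies in the sum of those finitely many $H$-weight spaces of $M$ whose weights already appear on $F'$, hence in a finite-dimensional subspace.

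For (2), I would exploit the involution $\sigma$ defining the symmetric pair. Since $\sigma(H)=H$, the decomposition ${\mathfrak{g}} = {\mathfrak{n}}^- + {\mathfrak{l}} + {\mathfrak{n}}^+$ is $\sigma$-stable, and writing ${\mathfrak{q}} := {\mathfrak{g}}^{-\sigma}$ one obtains ${\mathfrak{n}}^\pm = ({\mathfrak{n}}^\pm)' \oplus ({\mathfrak{n}}^\pm \cap {\mathfrak{q}})$. With ${\mathfrak{n}}^\pm$ abelian, the action of a generator of $({\mathfrak{n}}^+)'$ on $S({\mathfrak{n}}^-) \otimes {\mathbb{C}}_\lambda$ splits into a shift-by-$\lambda$ term and a commutator term taking values in ${\mathfrak{n}}^-$; a direct calculation should identify, for $\lambda \ll 0$, the space of $({\mathfrak{n}}^+)'$-invariants in $M$ with $S({\mathfrak{n}}^- \cap {\mathfrak{q}}) \otimes {\mathbb{C}}_\lambda$ as an ${\mathfrak{l}}'$-module, the negativity of $\lambda$ being precisely what prevents the shift factors from vanishing and so forbids spurious higher-degree invariants. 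Via the Frobenius reciprocity above, the Hom-space then reduces to $\operatorname{Hom}_{{\mathfrak{l}}'}\bigl(F',\, S({\mathfrak{n}}^- \cap {\mathfrak{q}}) \otimes {\mathbb{C}}_\lambda\bigr)$.

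The final step would be to invoke the classical multiplicity-freeness of $S({\mathfrak{n}}^- \cap {\mathfrak{q}})$ as an ${\mathfrak{l}}'$-module: for a symmetric pair with abelian nilradical, ${\mathfrak{n}}^- \cap {\mathfrak{q}}$ is a spherical (indeed prehomogeneous) ${\mathfrak{l}}'$-module of Kostant--Rallis type, and its symmetric algebra decomposes with every ${\mathfrak{l}}'$-isotypic component of dimension one. This yields $\dim \le 1$ for every simple $F'$, and the bound is attained (for instance on the lowest-degree constituent), so that $\sup_{F'} \dim = 1$. The main obstacle I expect is the identification of the $({\mathfrak{n}}^+)'$-invariants on the nose — i.e.\ pinning them down exactly rather than merely bounding them — and this is precisely the step where the F-method of the paper is designed to pay off: Fourier-transforming $({\mathfrak{n}}^+)'$-invariance turns the problem into a tractable system of first-order differential equations on polynomials over ${\mathfrak{n}}^-$, for which the solution space is manifestly $S({\mathfrak{n}}^- \cap {\mathfrak{q}})$ once $\lambda \ll 0$ trivializes the $\lambda$-dependent off-diagonal terms.
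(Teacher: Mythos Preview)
Your argument for Part~(1) is correct and is essentially the argument in \cite{K12}, to which the paper defers: Frobenius reciprocity together with the observation that $\mathfrak{g}'$-compatibility places the grading element $H$ inside $\mathfrak{l}'$, so any $\mathfrak{p}'$-map from the finite-dimensional $F'$ must land in a finite sum of $H$-weight spaces of $M$.

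For Part~(2) your outline is on the right track --- the paper again just cites \cite[Theorem~5.1]{K12}, whose proof does pass through the branching law for $M|_{\mathfrak{g}'}$ --- but two of your ingredients need more care than you indicate. First, calling $\mathfrak{n}^- \cap \mathfrak{q}$ a ``Kostant--Rallis type'' module is misleading: Kostant--Rallis concerns the $K$-action on $\mathfrak{p}$ in a Cartan decomposition, which is a different situation, and prehomogeneity is neither obvious nor quite the point. What you actually need is that $L'$ acts on $\mathfrak{n}^- \cap \mathfrak{q}$ as a \emph{spherical} (multiplicity-free) linear representation, and this does not follow formally from the symmetric-pair and abelian-nilradical hypotheses; in Kobayashi's work it is established via the theory of visible actions (see \cite{K08, mfbundl}), not by an appeal to classical invariant theory. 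Second, the identification of the $(\mathfrak{n}^+)'$-invariants in $M$ with $S(\mathfrak{n}^- \cap \mathfrak{q}) \otimes \mathbb{C}_\lambda$ --- which you rightly flag as the main obstacle --- carries most of the content. Your heuristic that ``$\lambda \ll 0$ trivializes the off-diagonal terms'' is not how the argument actually runs: those commutator terms do not vanish. Rather, $\lambda \ll 0$ guarantees that the generalized Verma modules for $\mathfrak{g}'$ occurring in the branching are simple, so that their $(\mathfrak{n}^+)'$-invariants are precisely the inducing modules $F_\mu$, and the branching law in \cite{K12} then pins down the invariants. So your sketch captures the architecture but underestimates the work hidden in both nontrivial steps.
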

\begin{proof}
1)\enspace
The proof is parallel to \cite[Theorem 3.10]{K12}
which treated the case
 where $F$ and $F'$ are simple modules
 of $P$ and $P'$, 
respectively.  
\par\noindent
2)\enspace
See \cite[Theorem 5.1]{K12}.  
\end{proof}
Hence Theorem \ref{thm:finVerma} is shown.  
We refer also to \cite[Theorem B]{K08}
 for an analogous statement
 to Theorem \ref{thm:finVerma} (2)
 which was formulated
 in the context of unitary representations.  

\begin{proof}
[Proof of Theorem \ref{thm:fmdiff}]
If $H$ is a generic hyperbolic element $H$
 in ${\mathfrak {g}}_{\mathbb{R}}'$, 
 then ${\mathfrak {p}}_{\mathbb{R}}(H)$
 is a minimal parabolic subalgebra of 
 ${\mathfrak {g}}_{\mathbb{R}}$
 owing to the rank assumption.  
Then Theorem \ref{thm:fmdiff} follows from Theorem \ref{thm:finVerma}.  
\end{proof}
\begin{remark}
In most cases
 where $G'$ is a proper non-compact subgroup 
 of $G$,
 for each representation $(\lambda,V)$ of $P$, 
 there exist continuously many representations
 $(\nu,W)$ of $P'$ 
 such that 
\[
\operatorname{Hom}_{G'}(C^{\infty}(X,{\mathcal{V}}),C^{\infty}(
Y,{\mathcal{W}}) )\ne \{0\}.  
\]
However,
 there are a few exceptional cases
 where only a finite number
 of irreducible representations 
$(\nu, W)$ of $P'$
 satisfy 
\[
\operatorname{Hom}_{G'}
(C^{\infty}(X,{\mathcal{V}}),
C^{\infty}(Y,{\mathcal{W}}))\ne \{0\}
\]
(\cite[Theorem 3.8]{xkzuckerman}).  
This happens 
 when $X=Y$ and $G \supsetneqq G'$.  
Even in this case,
Project A brought us a new interaction
 of the classical analysis
  (e.g. the Weyl operator calculus)
 and representation theory
 (e.g. infinite-dimensional representations
of minimal Gelfand--Kirillov dimensions)
 via symmetry breaking operators, 
 see \cite{xclare, KOP}
 for $(G,G')=
(GL(2n,{\mathbb{F}}),Sp(n,{\mathbb{F}}))$, 
 ${\mathbb{F}}={\mathbb{R}},{\mathbb{C}}$.  
\end{remark}

\section{F-method for continuous operators}
\label{sec:Fmethod}

The \lq{F-method}\rq\
 is a powerful tool to find singular vectors
 explicitly 
 in the Verma modules
 by using the algebraic Fourier transform
 \cite{xkhelgason}.  
We applied the \lq{F-method}\rq\
 in the previous papers
 \cite{KOSS,KP}
 to construct new covariant differential operators
 including classical Rankin--Cohen's 
 bi-differential operator
 \cite{C75,CMZ,xrankin}
 and Juhl's conformally covariant differential operator
 \cite{Juhl}.  

In this section,
 we generalize the idea 
 of the F-method
{}from local to non-local operators. 
We follow the notation of \cite{KP}.  
In particular,
 we regard distributions
 as generalized functions
 {\`a} la Gelfand
 rather than continuous linear forms
 on $C_c^{\infty}(X)$.  
Concrete examples
 will be discussed in Section \ref{sec:4}.  

We retain the setting 
 as before.  
Let $G_{\mathbb{C}}$ be a complexification
 of $G$.  
According to the Gelfand--Naimark decomposition 
$\mathfrak g=\mathfrak n^-+\mathfrak l+\mathfrak n^+$
of the complex reductive Lie algebra $\mathfrak{g}$, 
we have a diffeomorphism
$$
\mathfrak n^-\times L_{\mathbb{C}}\times \mathfrak n^+\to G_{\mathbb{C}},\quad
(X,\ell, Y)\mapsto (\exp X)\ell(\exp Y),
$$
onto an open subset $G^{\mathrm{reg}}_{\mathbb{C}}$
 containing the identity 
 of the complex Lie group $G_{\mathbb{C}}$. 
Let
$$
p_\pm: G^{\mathrm{reg}}_{\mathbb{C}}\longrightarrow
\mathfrak n^\pm,\qquad p_o:G^{\mathrm{reg}}_{\mathbb{C}}\to L_{\mathbb{C}},
$$ be the projections characterized by the identity
\[
\exp(p_-(g)) \, p_o(g) \, \exp(p_+(g))=g.
\]
Then the definition 
 of the following maps $\alpha$ and $\beta$
 is independent
 of the choice of $G_{\mathbb{C}}$:  
\begin{equation}
\label{eqn:alpha}
(\alpha,\beta)
:\mathfrak g \times \mathfrak n^-\to\mathfrak l \oplus {\mathfrak {n}}^-,
\quad
 (D,C)\mapsto \left.\frac {d}{dt}\right\vert_{t=0}
\left(p_o\left(e^{tD}e^C\right),p_-\left(e^{tD}e^C\right)\right).  
\end{equation}
For example,
 if ${\mathfrak{n}}^{\pm}$ are abelian
 or equivalently
 if $({\mathfrak {g}},{\mathfrak {l}})$ is 
 a symmetric pair, 
 then $\alpha:{\mathfrak {g}} \times {\mathfrak {n}}^-
\to {\mathfrak {l}}$
 takes the form
\[
  \alpha(D,C)=
\begin{cases}
[D,C]
  \qquad
  &\text{for }\,\,
  C \in {\mathfrak{n}}^-, D \in {\mathfrak{n}}^+,  
\\
0
  \qquad
  &\text{for }\,\,
  C \in {\mathfrak {n}}^-, 
  D \in {\mathfrak {n}}^- + {\mathfrak {l}}.  
\end{cases}
\]
For $D \in {\mathfrak {g}}$, 
 $\beta(D,\cdot\,)$ induces a linear map ${\mathfrak {n}}^-\to{\mathfrak {n}}^-$, 
 and thus we may regard $\beta(D, \cdot)$
 as a holomorphic vector field
 on $\mathfrak n^-$
 via the identification $\mathfrak n^-\ni C\mapsto\beta(D,C)
\in\mathfrak n^-\simeq T_C \mathfrak n^-.$

We recall 
${\mathfrak {n}}_{\mathbb{R}}^{\pm}$
 are real forms 
 of the complex Lie algebras ${\mathfrak {n}}^{\pm}$.  
Let $N^{\pm}:=\exp ({\mathfrak {n}}_{\mathbb{R}}^{\pm})$.  
Then 
\begin{equation}
\label{eqn:Bruhat}
\iota:{\mathfrak{n}}_{\mathbb{R}}^- \to G/P, 
\quad
Z \mapsto (\exp Z)P
\end{equation}
defines an open Bruhat cell $N^-P/P$
 in the real flag variety $G/P$.

We denote by ${\mathbb{C}}_{2 \rho}$
 the one-dimensional representation 
 of $P$
 on $| \Lambda^{\dim {\mathfrak {n}}}(\mathfrak {n})|$.  
The infinitesimal representation
 will be denoted by $2 \rho$.

Let $(\lambda, V)$ be a finite-dimensional representation of $P$
 with trivial action of $N^+$.  
Then the dualizing bundle 
 ${\mathcal{V}}^{\ast} := {\mathcal{V}}^{\vee} \otimes \Omega_{G/P}$
is given by 
\[
{\mathcal{V}}^{\ast}\simeq G\times_P
 (V^{\vee}
 \otimes {\mathbb{C}}_{2\rho})
\]
 as a homogeneous vector bundle.  
The pull-back of ${\mathcal{V}}^{\ast} \to G/P$
 to ${\mathfrak {n}}_{\mathbb{R}}^-$ via \eqref{eqn:Bruhat}
 defines the trivial vector bundle
 ${\mathfrak {n}}_{\mathbb{R}}^- \times V^{\vee} \to {\mathfrak {n}}_{\mathbb{R}}^-$,
 and induces an injective morphism
$
\iota^{\ast}:
C^{\infty}(G/P, {\mathcal{V}}^{\ast})
 \to C^{\infty}({\mathfrak {n}}_{\mathbb{R}}^-)\otimes V^{\ast}.
$  
The infinitesimal representation of the regular
 representation $C^{\infty}(G/P, {\mathcal{V}}^{\ast})$
 is given as an operator 
 on $C^{\infty}({\mathfrak {n}}_{\mathbb{R}}^-)\otimes V^{\vee}$
 by 
\begin{equation}
\label{eqn:pil}
d \pi_{\lambda}^{\ast}(D)
:=\langle d \lambda^{\vee} + 2 \rho \operatorname{id}_{V^{\vee}}
, 
\alpha(D, \cdot)\rangle
 - \beta(D, \cdot) \otimes \operatorname{id}_{V^{\vee}}
\quad
\text{for }
D \in {\mathfrak {g}}.  
\end{equation}
We may regard $d \pi_{\lambda}^{\ast}(D)$
 as an $\operatorname{End}(V^{\vee})$-valued
 holomorphic differential operator
 on ${\mathfrak {n}}^-={\mathfrak {n}}_{\mathbb{R}}^-
\otimes_{\mathbb{R}} \mathbb{C}$.

\vskip 1pc
Any continuous operator
 $T:C^{\infty}(X,{\mathcal{V}}) \to C^{\infty}(Y,{\mathcal{W}})$
 is given by a distribution kernel 
$K_T \in {\mathcal{D}}'(X \times Y,{\mathcal{V}}^{\ast} \boxtimes 
{\mathcal{W}})$
 by the Schwartz kernel theorem.  
We write 
\[
 m: G \times G' \to G, 
\quad
 (g,g') \mapsto (g')^{-1}g,  
\]
for the multiplication map.  
Then the pull-back $m^{\ast} K_T$
 is regarded as an element
 of ${\mathcal{D}}'(X, {\mathcal {V}}^{\ast}) \otimes W$.  
We have from \cite{KS} 
 the following two propositions:  
\begin{proposition}
\label{prop:KT}
The correspondence
 $T \mapsto m^{\ast} K_T$ induces a bijection:
\[
  \operatorname{Hom}_{G'}(C^{\infty}(X,{\mathcal{V}}),C^{\infty}(Y,{\mathcal{W}}))
\overset{\sim}\to
({\mathcal{D}}'(X,{\mathcal{V}}^{\ast})\otimes W)^{\Delta(P')}.  
\]
\end{proposition}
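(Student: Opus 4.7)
The plan is to peel the bijection apart in three steps: first pass to distribution kernels via Schwartz's kernel theorem, then use the multiplication map $m$ to eliminate the $Y$-variable, and finally match equivariances.

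To begin, I would apply the Schwartz kernel theorem in the bundle-valued setting to identify continuous linear operators $T\colon C^{\infty}(X,\mathcal{V})\to C^{\infty}(Y,\mathcal{W})$ with their distribution kernels $K_{T}\in \mathcal{D}'(X\times Y, \mathcal{V}^{*}\boxtimes\mathcal{W})$; the twist $\mathcal{V}^{*}=\mathcal{V}^{\vee}\otimes\Omega_{X}$ is exactly what makes the fibrewise pairing with sections of $\mathcal{V}$ integrate to a scalar. A routine change of variables then shows that $T$ is $G'$-intertwining if and only if $K_{T}$ is invariant under the diagonal left $G'$-action on $X\times Y$, where $G'$ acts on $X=G/P$ through the inclusion $G'\hookrightarrow G$ and on $Y=G'/P'$ by left translation, with the induced actions on $\mathcal{V}^{*}\boxtimes\mathcal{W}$.

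The geometric heart of the argument is that
\[
m\colon G\times G'\to G,\qquad (g,g')\mapsto (g')^{-1}g,
\]
is a surjective submersion whose fibres are precisely the orbits of the diagonal left $G'$-action on $G\times G'$, so that pullback by $m$ identifies $\mathcal{D}'(G)$ with the diagonal-$G'$-invariants in $\mathcal{D}'(G\times G')$. After lifting both sides of the proposition to the group level as $(P\times P')$- and $P$-equivariant distributions on $G\times G'$ and $G$ respectively, this descent furnishes a candidate bijection; the inverse is obtained by pulling an element of $\mathcal{D}'(X,\mathcal{V}^{*})\otimes W$ back through $m$ and reinterpreting it as a kernel on $X\times Y$.

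The last and most delicate step is matching equivariances. The right $P$-action on the $G$-factor of $G\times G'$ passes through $m$ unchanged and gives the $P$-equivariance that makes the output a distributional section of $\mathcal{V}^{*}\to X$. By contrast, the right $P'$-action on the $G'$-factor is transported by the identity $m(g,g'p')=(p')^{-1}(g')^{-1}g$ into \emph{left} translation by $P'$ on $G$, which is precisely the geometric action of $P'\subset G'\subset G$ on $X$; combined with the $\nu$-twist on the $W$-factor inherited from the original $P'$-equivariance, this forces $m^{\ast}K_{T}$ into the $\Delta(P')$-invariants of $\mathcal{D}'(X,\mathcal{V}^{*})\otimes W$. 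I expect the main obstacle to be careful bookkeeping of the density twist $\Omega_{G/P}$ encoded in $\mathcal{V}^{*}$ across these changes of variable, but this reduces to a straightforward Jacobian computation once $m$ has been parametrised locally.
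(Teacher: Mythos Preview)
The paper does not actually prove this proposition here; it is quoted from \cite{KS} with the sentence ``We have from \cite{KS} the following two propositions,'' so there is no in-paper argument to compare against.  That said, your plan is the standard and correct one, and it is almost certainly what underlies the cited result.

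A few remarks on the details.  Your step~3 is the key point and is sound: $m\colon G\times G'\to G$ is a surjective submersion whose fibres are exactly the orbits of the free, proper diagonal left $G'$-action, so it realizes $G\times G'$ as a principal $G'$-bundle over $G$, and pullback by $m$ identifies distributions on $G$ with diagonally $G'$-invariant distributions on $G\times G'$.  Your equivariance bookkeeping in step~4 is also right: the right $P$-action on the first factor commutes with $m$, while the right $P'$-action on the second factor is carried by $m(g,g'p')=(p')^{-1}m(g,g')$ into left translation by $P'$ on $G$, producing exactly the $\Delta(P')$-invariance on $\mathcal{D}'(X,\mathcal{V}^{\ast})\otimes W$.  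The density twist $\mathcal{V}^{\ast}=\mathcal{V}^{\vee}\otimes\Omega_{G/P}$ you flag is indeed only bookkeeping: it enters as the character $2\rho$ of $P$ (cf.\ the paragraph before \eqref{eqn:pil}) and is untouched by the left $G'$- and $P'$-actions, so no Jacobian subtlety arises there.

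One small point of phrasing: the paper writes the map as ``$T\mapsto m^{\ast}K_{T}$,'' which on its face is a pullback from $G$ to $G\times G'$ and hence goes the wrong way.  In your write-up you should make explicit that what is really meant is the \emph{inverse} of the pullback isomorphism (equivalently, restriction of the lifted kernel along the section $g\mapsto(g,e)$ of $m$); this is exactly what you describe, but it is worth saying plainly to avoid confusing the reader.
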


\begin{proposition}
\label{prop:KTN}
Assume
 that the natural multiplication map
 $P' \times N^- \times P \to G$
 is surjective,
 namely, 
\begin{equation}
\label{eqn:PNPG}
P' N^- P =G.  
\end{equation}
Then $\iota^{\ast}m^{\ast}K_T$ is a $W$-valued tempered
 distribution on ${\mathfrak {n}}_{\mathbb{R}}^-$, 
 and the correspondence
 $T \mapsto \iota^{\ast}m^{\ast}K_T$
 is injective:
\begin{equation}
\label{eqn:KTN}
  \operatorname{Hom}_{G'}(C^{\infty}(X,{\mathcal{V}}),C^{\infty}(Y,{\mathcal{W}}))
\hookrightarrow
{\mathcal{S}}'({\mathfrak {n}}_{\mathbb{R}}^-)\otimes W.  
\end{equation}
\end{proposition}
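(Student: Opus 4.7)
The plan is to deduce both assertions from Proposition~\ref{prop:KT} by combining the $\Delta(P')$-invariance of $m^{\ast}K_T$ with the surjectivity hypothesis \eqref{eqn:PNPG}. By Proposition~\ref{prop:KT} we may identify $T$ with the distribution $m^{\ast}K_T \in (\mathcal{D}'(X,\mathcal{V}^{\ast})\otimes W)^{\Delta(P')}$, so the task reduces to showing: (a) the restriction $\iota^{\ast}m^{\ast}K_T$ to the open Bruhat cell is tempered; and (b) this restriction determines $m^{\ast}K_T$ uniquely.

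For injectivity (b), the key observation is that \eqref{eqn:PNPG} translates on $X=G/P$ into
\[
P' \cdot \iota(\mathfrak{n}_{\mathbb{R}}^-) \;=\; P' N^- P/P \;=\; G/P \;=\; X,
\]
so that every point of $X$ lies in some $P'$-translate of the open Bruhat cell. If $\iota^{\ast}m^{\ast}K_T=0$, then the $\Delta(P')$-invariance of $m^{\ast}K_T$ propagates this vanishing: for each $p' \in P'$ the translate $m^{\ast}K_T$ must also vanish on $p'\cdot\iota(\mathfrak{n}_{\mathbb{R}}^-)$. Since these translates cover $X$, we conclude $m^{\ast}K_T=0$, whence $T=0$ by Proposition~\ref{prop:KT}.

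For temperedness (a), the idea is to exploit the smooth compactification $\iota(\mathfrak{n}_{\mathbb{R}}^-)\hookrightarrow X$. Since $X$ is a compact real analytic manifold and the transition between Bruhat charts on the flag variety is rational, the closure of the Bruhat cell in $X$ adds a lower-dimensional boundary divisor along which the natural coordinates on $\mathfrak{n}_{\mathbb{R}}^-$ tend polynomially to infinity. Consequently, after trivializing $\mathcal{V}^{\ast}$ over the cell, every Schwartz function $\varphi \in \mathcal{S}(\mathfrak{n}_{\mathbb{R}}^-)\otimes V$ extends by zero to a smooth section of $\mathcal{V}$ on $X$, and this extension map is continuous in the Fréchet topologies. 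Dualizing and tensoring with $W$ yields a continuous map $\mathcal{D}'(X,\mathcal{V}^{\ast})\otimes W \to \mathcal{S}'(\mathfrak{n}_{\mathbb{R}}^-)\otimes V^{\vee}\otimes W$, which is precisely $\iota^{\ast}$; applying it to $m^{\ast}K_T$ gives the desired tempered distribution.

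The main obstacle is step (a): one must verify carefully that the rational nature of the Bruhat compactification really matches Schwartz decay on $\mathfrak{n}_{\mathbb{R}}^-$ with smooth vanishing along the boundary divisor in $X$. This is a computation local to each boundary coordinate patch, where the inverse of a polynomial change of coordinates transports rapid decay to smoothness across the divisor; the compactness of $X$ then yields uniform seminorm estimates. By contrast, the injectivity argument (b) is almost formal once the orbit-theoretic consequence of \eqref{eqn:PNPG} is made explicit.
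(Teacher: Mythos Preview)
The paper does not actually prove Proposition~\ref{prop:KTN}; both it and Proposition~\ref{prop:KT} are simply quoted from \cite{KS} (see the sentence ``We have from \cite{KS} the following two propositions''), so there is no in-paper argument to compare against.

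Your outline is the natural one and is essentially correct. The injectivity step (b) is exactly right: the hypothesis $P'N^-P=G$ says that the $P'$-translates of the open Bruhat cell cover $X$, and since $m^{\ast}K_T$ is $\Delta(P')$-invariant (the $P'$-action on $X$ being compensated by $\nu$ on the $W$-factor), vanishing on the cell propagates to all of $X$, whence $T=0$ by Proposition~\ref{prop:KT}. For temperedness (a), your dualization strategy is also the standard one; the three facts that make the extension-by-zero map $\mathcal{S}(\mathfrak{n}_{\mathbb{R}}^-)\otimes V \to C^\infty(X,\mathcal{V})$ continuous are that $X$ is compact (so distributions have globally finite order), that the Bruhat charts on a real flag variety are related by rational coordinate changes, and that the transition functions of the finite-dimensional homogeneous bundle $\mathcal{V}$ grow at most polynomially in the cell coordinates. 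You correctly flag this as the step needing the most care; once the local chart computation is carried out, the argument is complete. A minor notational point: in your formulation of (a) the test functions live in $\mathcal{S}(\mathfrak{n}_{\mathbb{R}}^-)\otimes V$ and extend to sections of $\mathcal{V}$ (not $\mathcal{V}^{\ast}$), which is consistent with the paper's Gelfand convention since $(\mathcal{V}^{\ast})^{\vee}\otimes\Omega_X\simeq\mathcal{V}$.
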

The idea of the F-method
 for non-local operators
 is to characterize the image
 of \eqref{eqn:KTN}
 by the Fourier transform
 ${\mathcal{S}}'({\mathfrak {n}}_{\mathbb{R}}^-)
 \overset \sim \to 
 {\mathcal{S}}'({\mathfrak {n}}_{\mathbb{R}}^+)$.

For this, 
 we recall the algebraic Fourier transform
 of the Weyl algebra.  
Let $E$ be a complex vector space, 
 and denote by ${\mathcal{D}}(E)$  
 the ring of holomorphic differential operators on $E$
with polynomial coefficients. 

\begin{definition}
[algebraic Fourier transform]
\label{def:hat}
 We define the \emph{algebraic Fourier transform} as an isomorphism
 of the Weyl algebras on $E$ and its dual space $E^\vee$:
$$
\mathcal D(E)\to\mathcal D(E^\vee), \qquad S\mapsto \widehat S,
$$
induced by
$$\widehat{\frac\partial{\partial z_j}}:=-\zeta_j,\quad
\widehat z_j:=\frac\partial{\partial\zeta_j},\quad 1\leq j\leq n=\dim E, 
$$ 
where $(z_1,\ldots,z_n)$ are coordinates on $E$
 and $(\zeta_1,\ldots,\zeta_n)$ are the dual coordinates on $E^\vee$.  
The definition does not depend on the choice of coordinates. 
\end{definition}

Suppose $P$ is a $G'$-compatible parabolic subgroup
 of $G$ (Definition \ref{def:2.6}),
 and we take $P'$ to be a parabolic subgroup 
 of $G'$ defined by 
 $P \cap G'=L'\exp({\mathfrak {n}}_{\mathbb{R}}^+)'$.  
We define a subspace
 of $\operatorname{Hom}(V,W)$-valued tempered distributions
 on ${\mathfrak {n}}_{\mathbb{R}}^+$
 by 
\[
\Sol(V, W)^{\wedge}
:=\{
  F (\xi)\in {\mathcal{S}}'({\mathfrak {n}}_{\mathbb{R}}^+)
\otimes \operatorname{Hom}(V,W)
:
F \text{ satisfies \eqref{eqn:invL} and \eqref{eqn:invn}
on $ {\mathfrak {n}}_{\mathbb{R}}^+$}
\}, 
\]
where 
\begin{equation}
\label{eqn:invL}
   \nu(l) \circ F(\operatorname{Ad}(l^{-1})\cdot)
   \circ \lambda(l^{-1})
   =
   F(\cdot)
   \quad\text{for all }
   l \in L', 
\end{equation}
\begin{equation}
\label{eqn:invn}
(\widehat{d\pi_{\lambda}^{\ast}(C)}\otimes \operatorname{id}_W
+\operatorname{id}_{V^{\vee}} \otimes d \nu(C))
|_{\zeta =-i \xi}
\,\,
F
=0
\quad
\text{for all }C \in ({\mathfrak {n}}^+)'.  
\end{equation}

Combining \cite{KP} and Proposition \ref{prop:KTN}, 
 we obtain:
\begin{theorem}
[F-method for continuous operators]
\label{thm:Fcont}
Let $G$ be a reductive linear Lie group,
 and $G'$ a reductive subgroup.  
Suppose $P$ is a $G'$-compatible parabolic subgroup of $G$, 
 and $P'=P \cap G'$.  
We assume \eqref{eqn:PNPG}.  

Then the Fourier transform ${\mathcal{F}}_{\mathbb{R}}$
 (see Remark \ref{rem:45})
 of the distribution kernel 
 induces the following bijection:
\begin{equation}
\label{eqn:FC}
  \operatorname{Hom}_{G'}(C^{\infty}(G/P, {\mathcal{V}}), 
C^{\infty}(G'/P', {\mathcal{W}}))
  \overset \sim \to 
  \Sol(V,W)^{\wedge}. 
\end{equation}
\end{theorem}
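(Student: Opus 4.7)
The plan is to obtain the bijection \eqref{eqn:FC} by composing three identifications: the Schwartz kernel formulation of Proposition \ref{prop:KT}, the restriction to the open Bruhat cell in Proposition \ref{prop:KTN}, and the Fourier transform $\mathcal{F}_{\mathbb{R}}:\mathcal{S}'(\mathfrak{n}_{\mathbb{R}}^-) \overset{\sim}{\to} \mathcal{S}'(\mathfrak{n}_{\mathbb{R}}^+)$, followed by a verification that the resulting target is exactly $\Sol(V,W)^\wedge$.

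First, Proposition \ref{prop:KT} identifies the left-hand side of \eqref{eqn:FC} with $(\mathcal{D}'(X,\mathcal{V}^{\ast}) \otimes W)^{\Delta(P')}$, and under the assumption \eqref{eqn:PNPG} Proposition \ref{prop:KTN} embeds this injectively into $\mathcal{S}'(\mathfrak{n}_{\mathbb{R}}^-) \otimes \operatorname{Hom}(V,W)$ via $T \mapsto \iota^{\ast} m^{\ast} K_T$. Since $P$ is $G'$-compatible we have $P' = L'\exp((\mathfrak{n}_{\mathbb{R}}^+)')$, so the $\Delta(P')$-invariance splits into (a) an $L'$-equivariance condition, which translates directly into \eqref{eqn:invL} phrased on the Bruhat coordinate of $\mathfrak{n}_{\mathbb{R}}^-$, and (b) for each $C \in (\mathfrak{n}^+)'$, the infinitesimal equation
\[
\bigl(d\pi_\lambda^{\ast}(C) \otimes \operatorname{id}_W + \operatorname{id}_{V^{\vee}} \otimes d\nu(C)\bigr)(\iota^{\ast} m^{\ast} K_T) = 0,
\]
where $d\pi_\lambda^{\ast}(C)$ is the polynomial-coefficient holomorphic differential operator on $\mathfrak{n}^-$ prescribed by \eqref{eqn:pil}. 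This splitting is obtained by repeating, now on tempered distributions over the Bruhat cell, the computation carried out in the local (Verma-module) case in \cite{KP}.

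Next I would apply $\mathcal{F}_{\mathbb{R}}$ to both sides. Condition \eqref{eqn:invL} is preserved in form because $L'$ acts linearly on $\mathfrak{n}_{\mathbb{R}}^{\pm}$. The substantive content is the translation of the infinitesimal equations into \eqref{eqn:invn}: using the standard dictionary
\[
\mathcal{F}_{\mathbb{R}}\circ \frac{\partial}{\partial z_j} = \sqrt{-1}\,\xi_j \circ \mathcal{F}_{\mathbb{R}}, \qquad \mathcal{F}_{\mathbb{R}}\circ z_j = \sqrt{-1}\,\frac{\partial}{\partial \xi_j}\circ \mathcal{F}_{\mathbb{R}},
\]
together with the substitution $\zeta = -\sqrt{-1}\,\xi$, matches the conventions of Definition \ref{def:hat} and produces the intertwining formula $\mathcal{F}_{\mathbb{R}}\circ d\pi_\lambda^{\ast}(C) = \widehat{d\pi_\lambda^{\ast}(C)}|_{\zeta = -\sqrt{-1}\xi}\circ \mathcal{F}_{\mathbb{R}}$. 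Applied to the infinitesimal equations this yields \eqref{eqn:invn} verbatim, so the image of $\mathcal{F}_{\mathbb{R}} \circ \iota^{\ast} \circ m^{\ast}$ is exactly $\Sol(V,W)^{\wedge}$.

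The main obstacle I anticipate is the first step: verifying that the operator \eqref{eqn:pil}, defined a priori on smooth sections, has polynomial coefficients in the Bruhat coordinates (so that the algebraic Fourier transform is applicable) and that the splitting of $\Delta(P')$-invariance survives the passage to tempered distributions. Polynomiality follows from the explicit shape of $\alpha$ and $\beta$ in \eqref{eqn:alpha}; the survival of the splitting follows from the continuity of the infinitesimal $\mathfrak{g}$-action on $\mathcal{D}'(X,\mathcal{V}^{\ast})$ combined with the injectivity statement of Proposition \ref{prop:KTN}. Given these points, injectivity of \eqref{eqn:FC} is immediate from Proposition \ref{prop:KTN}, while surjectivity follows because any $F \in \Sol(V,W)^{\wedge}$ pulls back through $\mathcal{F}_{\mathbb{R}}^{-1}$ to a tempered distribution on $\mathfrak{n}_{\mathbb{R}}^-$ that is $\Delta(P')$-invariant on the open Bruhat cell, and \eqref{eqn:PNPG} together with Proposition \ref{prop:KT} allows a unique $\Delta(P')$-invariant extension to all of $X \times Y$.
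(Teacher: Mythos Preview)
Your proposal is correct and follows essentially the same route as the paper, which records only the one-line justification ``Combining \cite{KP} and Proposition~\ref{prop:KTN}, we obtain'' before stating the theorem; your write-up simply unpacks that sentence by spelling out the kernel identification (Propositions~\ref{prop:KT} and~\ref{prop:KTN}), the splitting of $\Delta(P')$-invariance into the $L'$- and $(\mathfrak{n}^+)'$-parts, and the passage through the algebraic Fourier transform exactly as in \cite{KP}. The only place you might tighten the exposition is the surjectivity step: Proposition~\ref{prop:KT} does not itself furnish the extension from the Bruhat cell, so it is cleaner to say that \eqref{eqn:PNPG} lets one propagate a $P'$-equivariant tempered distribution on $\mathfrak{n}_{\mathbb{R}}^-$ to a $\Delta(P')$-invariant element of $\mathcal{D}'(X,\mathcal{V}^{\ast})\otimes W$, after which Proposition~\ref{prop:KT} applies.
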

\begin{remark}
\label{rem:Fcont}
Theorem \ref{thm:Fcont} extends 
 the following bijection:
\begin{equation}
\label{eqn:FD}
  \operatorname{Diff}_{G'}(C^{\infty}(X, {\mathcal{V}}), 
C^{\infty}(Y, {\mathcal{W}}))
  \simeq
  \Sol(V,W)^{\wedge} \cap \operatorname{Pol}({\mathfrak {n}}^+), 
\end{equation}
which was proved in \cite{KP}
 (cf. \cite{KOSS})
 without the assumption \eqref{eqn:PNPG}.  
\end{remark}
\begin{remark}\label{rem:45}
Suppose $E_{\mathbb{R}}$ is a real form of $E$.  
If $f$ is a compactly supported distribution
 on $E_{\mathbb{R}}$, 
 the Fourier transform ${\mathcal{F}}_{\mathbb{R}}f$
 extends holomorphically 
 on the entire complex vector space $E^{\vee}$.  
In this case, 
 we may compare two conventions
 of \lq{Fourier transforms}\rq\
\begin{align*}
{\mathcal{F}}_c
:\,\,&
{\mathcal{E}}'(E_{\mathbb{R}}) \to {\mathcal{O}}(E^{\vee}), 
\quad
f \mapsto \int_{E_{\mathbb{R}}} f(x) e^{\langle x, \zeta \rangle} dx, 
\\
{\mathcal{F}}_{\mathbb{R}}
:\,\,&
{\mathcal{E}}'(E_{\mathbb{R}}) \to {\mathcal{O}}(E^{\vee}), 
\quad
f \mapsto \int_{E_{\mathbb{R}}} f(x) e^{-i\langle x, \xi \rangle} dx, 
\end{align*}
where ${\mathcal{E}}'(E_{\mathbb{R}})$
 stands for the space
 of compactly supported distributions.  
We note
\begin{equation}
\label{eqn:FRc}
({\mathcal{F}}_{\mathbb{R}}f)(\xi)
=
({\mathcal{F}}_c f)(\zeta)
\qquad
\text{with $\zeta=-i \xi$}.  
\end{equation}
In \cite{KP}
 we have adopted ${\mathcal{F}}_c$
 instead of ${\mathcal{F}}_{\mathbb{R}}$.  
An advantage 
 of ${\mathcal{F}}_c$ is
 that the algebraic Fourier transform defined in Definition \ref{def:hat} satisfies
\begin{equation}\label{eqn:F-trans}
 \widehat T=\mathcal F_{c}\circ T\circ \mathcal F_{c}^{-1}
\quad\text{for all $T \in \mathcal{D}(E)$}, 
\end{equation}
which simplifies actual computations 
 in the F-method.  
In the case 
where ${\mathfrak {n}}^+$ is abelian, 
the bijection \eqref{eqn:FD}
 is compatible
 with the symbol map
\[
  \operatorname{Diff}_{G'}(C^{\infty}(X, {\mathcal{V}}), 
C^{\infty}(Y, {\mathcal{W}}))
  \to
  \operatorname{Pol}({\mathfrak {n}}^+)
  \otimes 
  \operatorname{Hom}_{\mathbb{C}}(V,W), 
\]
if we use ${\mathcal{F}}_c$
 instead of ${\mathcal{F}}_{\mathbb{R}}$
 (see \cite[Theorem 3.5]{xkhelgason} or \cite{KP}).  
\end{remark}

\section{Conformally covariant symmetry breaking}
\label{sec:3}
In this section
 we set up some notation 
 for conformally covariant operators
in the setting
 where the groups $G' \subset G$
 act conformally
 on two pseudo-Riemannian manifolds
 $Y \subset X$, 
 respectively.

Let $X$ be a smooth manifold
 equipped with a pseudo-Riemannian structure $g$.  
Suppose a group $G$ acts conformally on $X$.  
The action will be denoted 
by $
  L_h : X \to X
$, 
$
  x \mapsto L_h x  
$
 for $h \in G$.  
Then there exists a positive-valued function 
 $\Omega$ on $G \times X$
 such that 
\[
  L_h^{\ast}(g_{L_h x}) = \Omega(h,x)^2 g_x
  \quad
  \text{ for any }
  h \in G, \text{ and } x \in X.  
\]

Fix $\lambda \in {\mathbb{C}}$, 
 and we define a linear map
 $\varpi_{\lambda}(h^{-1}):C^{\infty}(X) \to C^{\infty}(X)$
 by 
\[
  (\varpi_{\lambda}(h^{-1})f)(x)
  := \Omega(h,x)^{\lambda}f(L_hx).  
\]
Since the conformal factor $\Omega$
 satisfies the cocycle condition:
\[
\Omega(h_{1}h_{2},x)= \Omega(h_{1},L_{h_{2}}x)\Omega(h_{2},x)
\quad
 \text{for }h_1, h_2 \in G, x\in X, 
\]
 we have formed a family
 of representations
$\varpi_{\lambda}
\equiv \varpi_{\lambda}^X$
of $G$
 on $C^{\infty}(X)$
 with complex parameter $\lambda$
 (see \cite[Part I]{xkors} for details).  
\begin{remark}
{{\rm{1)}}} \enspace
If $G$ acts on $X$ as isometries,
 then $\Omega \equiv 1$ and therefore the representation
 $\varpi_{\lambda}$ does not depend on $\lambda$.  
\par\noindent
{{\rm{2)}}} \enspace
Let $n$ be the dimension of $X$.  
Then in our normalization,
 $(\varpi_n, C^{\infty}(X))$
 is isomorphic to the representation 
 on $C^{\infty}(X, \Omega_X)$
 where $\Omega_X$ denotes the bundle of volume densities.  
\end{remark}

Let $\operatorname{Conf}(X)$ be 
 the full group of conformal transformations 
 on $(X,g)$.  
Suppose $Y$ is a submanifold of $X$
 such that the restriction $g|_Y$ is non-degenerate.  
Clearly,
 this assumption is automatically satisfied
 if $(X,g)$ is a Riemannian manifold.  
We define a subgroup of $\operatorname{Conf}(X)$
 by 
\[
  \operatorname{Conf}(X;Y)
  =
  \{\varphi \in \operatorname{Conf}(X)
   :
   \varphi(Y) \subset Y\}.  
\]

For $\varphi \in \operatorname{Conf}(X;Y)$, 
 $\varphi$ induces a conformal transformation on $(Y,g|_Y)$, 
 and we get a natural group homomorphism
\[
   \operatorname{Conf}(X;Y)
   \to 
   \operatorname{Conf}(Y).  
\]
We write $ \operatorname{Conf}_Y(X)$ for its image.

Thus, 
 for $\lambda, \nu \in {\mathbb{C}}$, 
 we have the following two representations:
\begin{align*}
 \varpi_{\lambda}^X:& \operatorname{Conf}(X) \to GL_{\mathbb{C}}(C^{\infty}(X)), 
\\
 \varpi_{\nu}^Y:& \operatorname{Conf}(X;Y) \to GL_{\mathbb{C}}(C^{\infty}(Y)).  
\end{align*}
We are ready to state the following problem:
\begin{problem}
\label{prob:3.2}
{\rm{1)}} \enspace
Classify $(\lambda, \nu) \in {\mathbb{C}}^2$
 such that there exists a non-zero continuous/differential
 operator 
\[
  T_{\lambda, \nu}:C^{\infty}(X) \to C^{\infty}(Y)
\]
satisfying 
\[
\varpi_{\nu}^Y (h) \circ T_{\lambda, \nu}
  = T_{\lambda, \nu} \circ \varpi_{\lambda}^X(h)
\quad 
 \text{for all }
 h \in \operatorname{Conf}(X; Y).  
\]
\par\noindent
{\rm{2)}} \enspace
Find explicit formulas
 of the operators $T_{\lambda, \nu}$.  
\end{problem}

We begin with an obvious example.  
\begin{example}
\label{ex:1}
Suppose $\lambda = \nu$.  
We take $T_{\lambda, \nu}$ 
 to be the restriction of functions from $X$ to $Y$.  
Clearly,
 $T_{\lambda, \nu}$ intertwines $\varpi_{\lambda}$
 and $\varpi_{\nu}$.  
\end{example}
Problem \ref{prob:3.2} concerns a geometric aspect
 of the general branching problem
 for representations
 with respect to the restriction 
 $G \downarrow G'$
 in the case where 
\[
(G,G')=(\operatorname{Conf}(X), \operatorname{Conf}_Y(X)).  
\]
We shall see in Section \ref{sec:4}
 that a special case of Problem \ref{prob:3.2}
 is a special case of Project A.  
In Section \ref{sec:4}, 
 we shall write $\varpi_{\lambda}^G$ and $\varpi_{\nu}^{G'}$
 for $\varpi_{\lambda}\equiv \varpi_{\lambda}^X$
 and $\varpi_{\nu}\equiv \varpi_{\nu}^Y$,
respectively,
 in order to emphasize the groups $G$ and $G'$.

We continue 
with some further examples 
of Problem \ref{prob:3.2}.  

\begin{example}
[Eastwood--Graham]
\label{ex:GGV}
Let $X=Y$ be the sphere $S^n$ endowed with a standard Riemannian metric,
 and we take $G=G'$ to be the Lorentz group $SO(n+1,1)$
 that act conformally on $S^n$ by the M\"obius transform.
In this case, $G$ is a semisimple Lie group of real rank one,
 and all local/non-local intertwining operators
 were classified by Gelfand--Graev--Vielenkin \cite{GGV} for $n=1,2$,
 by Eastwood--Graham \cite{EGR} for general $n$ when
 $\mathcal V$ and $\mathcal W$ are line bundles.
In particular,
 all conformal invariants for densities in this case
 are given by residues of continuous conformal intertwiners.  
It is noted
 that an analogous statement 
 is not always true
 when $G'\ne G$
 (cf. Remark \ref{rem:Leven}). 
\end{example}
\begin{example}
\label{ex:2}
Suppose $X=Y$.  
Then $G=G'$.  
Let $n$ be the dimension of the manifold,
 and we consider the following specific parameter:
\[\lambda = \frac 1 2 n -1,
\quad
\nu=\frac 1 2 n +1.  \]
Then the Yamabe operator
 $\widetilde{\Delta_X}$ satisfies
$
  \varpi_{\nu} \circ \widetilde{\Delta_X}
=
\widetilde{\Delta_X} \circ \varpi_{\lambda}
$, 
where 
\[
  \widetilde{\Delta_X}
  :=
   \Delta_X-\frac{n-2}{4(n-1)}\kappa.  
\]
Here $\Delta_X$ is the Laplacian
 for the pseudo-Riemannian manifold $(X,g)$,
 and $\kappa$ is the scalar curvature.

In particular,
 if $X$ is the direct product
 of two spheres $S^p \times S^q$
 endowed with the pseudo-Riemannian structure
 $g_{S^p}\oplus (-g_{S^q})$
 of signature $(p,q)$,
then the kernel $\operatorname{Ker}(\widetilde{\Delta_X})$
 gives rise to an important irreducible unitary representation
 of $\operatorname{Conf}(X) \simeq O(p+1,q+1)$, 
 so-called a {\it{minimal representation}}
 for $p+q$ even, 
 $p,q \ge 1$, 
 and $p+q \ge 6$
\cite{xkcheck, xkors}
 in the sense
 that its annihilator in the enveloping algebra
 $U({\mathfrak {o}}(p+q+2, {\mathbb{C}}))$
 is the Joseph ideal
 \cite{xbz,xkostant}.  
The same representation is known 
 to have different realizations
 and constructions,
 e.g. the local theta correspondence
 \cite{xhuzh}, 
 and the Schr{\"o}dinger model 
 \cite{xkmanoAMS}.  
\end{example}

\begin{example}
[\cite{Juhl, KOSS, KS}]
\label{ex:3}
Let $X$ be the standard sphere $S^n$
 and $Y=S^{n-1}$
 a totally geodesic hypersurface
 (\lq{great circle}\rq\ when $n=2$).  
Then we have covering maps
\begin{alignat*}{3}
   G&:=O(n+1,1)&& \twoheadrightarrow  &&\operatorname{Conf}(X), 
\\
    &\hphantom{mmmm}\cup &&     && \hphantom{mm}\cup
\\
   G'&:=O(n,1)\hphantom{ii}&&  \twoheadrightarrow &&\operatorname{Conf}_Y(X).  
\end{alignat*}
Then non-zero $G'$-equivariant
 differential operators
 $T_{\lambda, \nu}:C^{\infty}(X) \to C^{\infty}(Y)$ exist
 if and only if 
 the parameter $(\lambda,\nu)$ satisfies 
 $\nu-\lambda \in \{0,2,4,\cdots\}$.  
(Here, 
 the parity condition arises from the fact
 that $G$ and $G'$ are disconnected groups, 
 cf. \cite{Juhl, KOSS}.)
In this case
$
  \dim \operatorname{Diff}_{G'}(C^{\infty}(X),C^{\infty}(Y))=1.  
$

In order to describe this differential operator $T_{\lambda,\nu}$
 explicitly,
 we use the stereographic projection 
\begin{equation}
\label{eqn:SR}
  S^n \to {\mathbb{R}}^n \cup \{\infty\},
  \quad
  (s, \sqrt{1-s^2}\omega) \mapsto \sqrt{\frac{1-s}{1+s}}\omega,
\end{equation}
and the corresponding twisted pull-back
 (see \cite[Part I]{xkors})
for the conformal map \eqref{eqn:SR}, 
\begin{equation}
\label{eqn:il}
  \iota_{\lambda}^{\ast}
  :
   C^{\infty}(S^n) \hookrightarrow C^{\infty}({\mathbb{R}}^n),
   \quad
   f \mapsto F
\end{equation}
is given by 
\[
  F(r \omega) 
:=(1+r^2)^{-\lambda}f(\frac{1-r^2}{1+r^2},\frac{2r}{1+r^2}\omega)
\quad
\text{for $r >0$ and $\omega \in S^{n-1}$}.
\]
In the coordinates,
 we realize the submanifold $Y$ correspondingly
 to the hyperplane $x_n=0$
 via \eqref{eqn:SR}, 
 namely,
 we have a commutative diagram
 of the stereographic projections:
\begin{alignat*}{3}
X=& \,S^n &&\longrightarrow \,\,\,&&{\mathbb{R}}^n \cup \{\infty \}
\\
 &  \,\,\cup && && \,\,\,\,\cup 
\\
Y=& \,S^{n-1} &&\longrightarrow \,\,\,&&{\mathbb{R}}^{n-1} \cup \{\infty \}
                  =\{(x_1, \cdots, x_{n-1},x_n)\in {\mathbb{R}}^n : x_n=0 \} \cup \{\infty \}.  
\end{alignat*} 
Accordingly,
 the subgroup $G'$ is defined
 as the isotropy subgroup
 of $G$
 at $e_n={}^{t\!}(0,\cdots, 0, 1,0) \in {\mathbb{R}}^{n+1}$.

Then for $\nu-\lambda=2l$
 ($l \in {\mathbb{N}}$), 
 the equivariant differential operator
 $T_{\lambda,\nu}$ is a scalar multiple
 of the following differential operator:
\begin{align}
 \tcC{\lambda}{\nu}
:\,\, &C^{\infty}({\mathbb{R}}^n) \to C^{\infty}({\mathbb{R}}^{n-1}),
\notag
\\
\label{eqn:Cop}
  &F \mapsto \sum_{j=0}^l 
\frac{2^{2l-2j}\prod_{i=1}^{l-j} \Bigl(\frac{\lambda+\nu-n-1}{2}+i\Bigr)}
{j!(2l-2j)!}
   \Delta_{\mathbb{R}^{n-1}}^j \Bigl(\frac{\partial}{\partial x_n}\Bigr)
^{2l-2j} F|_{x_n=0}.  
\end{align}
The differential operator $\tcC\lambda\nu$ can be written
by using the Gegenbauer polynomial as follows.  
Let $C_N^\mu(t)$ be the Gegenbauer polynomial
of degree $N$, 
 and we inflate $C_N^\mu(t)$ to a polynomial of two variables $v$, $t$
 by
\begin{align}
\widetilde{C}_{2l}^\mu (v,t)
:=& \frac{\Gamma(\mu)}{\Gamma(\mu+l)} v^l C_{2l}^\mu (\frac{t}{\sqrt{v}})
\label{eqn:Cmul2}
\\
=& \sum_{j=0}^l \frac{(-1)^j 2^{2l-2j}}{j!(2l-2j)!}
   \prod_{i=1}^{l-j} (\mu + l + i - 1) v^j t^{2l-2j} .
\notag
\end{align}
We note
 that the definition \eqref{eqn:Cmul2} makes sense
 if $v$, $t$ are elements
 in any commutative algebra $R$.  
In particular,
 taking $R={\mathbb{C}}[\frac{\partial}{\partial x_1}, 
\cdots, \frac{\partial}{\partial x_n}]$, 
we have the following expression:
\[
\tcC{\lambda}{\nu} F=
\widetilde{C}_{2l}^{\lambda-\frac{n-1}{2}}
   (-\Delta_{\mathbb{R}^{n-1}}, \frac{\partial}{\partial x_n})
F|_{x_n=0}.  
\]
A.~Juhl \cite{Juhl} proved the formula \eqref{eqn:Cop}
 by a considerably long computation based on recurrence relations.  
In \cite{KOSS}
 we have provided a new and simple proof
 by introducing another differential equation
 (see \eqref{eqn:fund} below)
 which controls the operators $\tcC{\lambda}{\nu}$
 ({\it{F-method}}).  
In the next section we give yet another proof
 of the formula \eqref{eqn:Cop}
{}from the residue calculations
 of non-local symmetry breaking operators
 with meromorphic parameter (see Theorem \ref{thm:AC}). 
\end{example}

\section{Conformally covariant non-local operators}
\label{sec:4}
In this section
 we analyze both non-local and local,
 conformally covariant operators
 in the setting of Example \ref{ex:3}
 by using the F-method.  

{}From the viewpoint 
 of representation theory,
 Example \ref{ex:3}
 deals with symmetry breaking differential operators
 between spherical principal series
 representations $\varpi_{\lambda}^G$
 and those $\varpi_{\nu}^{G'}$
 when $(G,G')=(O(n+1,1),O(n,1))$.  
In this case 
 all the assumptions
 for Theorems \ref{thm:fmdiff} and \ref{thm:fimHom}
 are fulfilled, 
 and therefore we tell a priori
 that both of the sides in \eqref{eqn:DC}
 are of uniformly bounded dimensions
 (actually,
 at most two-dimensional).  
In the joint work \cite{KS}
with B.~Speh, 
we give a complete classification
 of such symmetry breaking operators
 between line bundles for both non-local and local ones 
 with explicit generators,
which seems to be the first complete example
 of Project A
 in a setting where $G' \subsetneqq G$.

However,
 the techniques employed in \cite{KS} are not the F-method.  
Therefore
 it might be of interest
 to illuminate some of the key results of \cite{KS}
 {}from the scope of a generalized F-method
 (Section \ref{sec:Fmethod}).  
To achieve this aim 
 for the current section,
 we focus on the functional equations
(Theorem \ref{thm:TAATE})
 among non-local symmetry breaking operators
 $\tcA{\lambda}{\nu}$ 
(see \eqref{eqn:Atilde} below)
 and the relation between $\tcA{\lambda}{\nu}$
 and the differential operators
 $\tcC{\lambda}{\nu}$
 (Theorem \ref{thm:AC}).

\vskip 1pc
The novelty here
 is the following correspondence:
\vskip 1pc
{
\renewcommand{\arraystretch}{1.5}
\begin{tabular}{c|c}
   Symmetry breaking operators   &  F-method
\\
\hline
  Functional equations (Theorem \ref{thm:TAATE})
  & Kummer's relation
\\
  Residue formulae of $\tcA{\lambda}{\nu}$(Theorem \ref{thm:AC})
  &${}_2F_1 (a,b;c;z)$ reduces to a polynomial
    
\\
  & if $a \in -{\mathbb{N}}$
\end{tabular}
}

First of all we review quickly 
 some notation
 and results from \cite{KS}.  
Then the rest of this subsection
 will be devoted to provide
 some perspectives
{}from the F-method.

We set $|x|:=(x_1^2+\cdots + x_{n-1}^2)^{\frac 1 2}$
 for $x=(x_1, \cdots, x_{n-1}) \in {\mathbb{R}}^{n-1}$.  
If $(\lambda,\nu) \in {\mathbb{C}}^2$ satisfies
\begin{equation}
\label{eqn:lnopen}
\operatorname{Re}(\lambda-\nu)>0
\quad
\text{ and }
\quad
\operatorname{Re}(\lambda+\nu)>n-1,
\end{equation}
then 
\[
   \ka\lambda\nu(x,x_n):=|x_n|^{\lambda+\nu-n}(|x|^2+x_n^2)^{-\nu}
\]
 is locally integrable on ${\mathbb{R}}^n$,
 and the integral operator
\[
C_c^{\infty}({\mathbb{R}}^n) \to C^{\infty}({\mathbb{R}}^{n-1}),
\quad
F \mapsto 
\int_{{\mathbb{R}}^n} F(y,y_n) \ka\lambda\nu(x-y,-y_n)dy d y_n
\]
extends to a $G'$-intertwining operator
 via \eqref{eqn:il}
\[
\cA{\lambda}{\nu}:C^{\infty}(S^n) \to C^{\infty}(S^{n-1}),
\]
namely,
\[
\cA{\lambda}{\nu}\circ \varpi_{\lambda}^G(h)
=
\varpi_{\nu}^{G'}(h)\circ \cA{\lambda}{\nu}
\quad
\text{ for all $h \in G'$.}  
\]

The important property
 of our symmetry breaking operators $\cA{\lambda}{\nu}$
 is the existence
 of the meromorphic continuation
 to $(\lambda,\nu) \in {\mathbb{C}}^2$
 (see Theorem \ref{thm:meroA})
 and the functional equations
 satisfied by 
 $\cA {\lambda}{\nu}$
 and the Knapp--Stein intertwining operators
 (see Theorem \ref{thm:TAATE}).  
We note
 that the celebrated theorem \cite{xatiyah,xbege}
 on meromorphic continuation
 of distributions
 does not apply immediately to our distribution 
 $\ka\lambda\nu$
 because the two singularities $x_n=0$
 and $|x|^2+x_n^2=0$
 (i.e. the origin) 
 have an inclusive relation
 and are not transversal.  
Further,
 it is more involved 
 to find the location
 of the poles and their residues.  
In \cite{KS}
 we have found all the poles 
 and their residues explicitly,
 and in particular,
 we have the following theorems:
\begin{theorem}
\label{thm:meroA}
We normalize
\begin{equation}
\label{eqn:Atilde}
\tcA{\lambda}{\nu}:=
\frac{1}{\Gamma(\frac{\lambda+\nu-n+1}{2})
        \Gamma(\frac{\lambda-\nu}{2})}\cA{\lambda}{\nu}.  
\end{equation}
Then $\tcA{\lambda}{\nu}:C^{\infty}(S^n) \to C^{\infty}(S^{n-1})$, 
 initially holomorphic in the domain given by \eqref{eqn:lnopen},
 extends to a continuous operator
 for all $(\lambda,\nu)\in {\mathbb{C}}^2$
 and satisfies
\[
\tcA{\lambda}{\nu}\circ \varpi_{\lambda}^G(h)
=
\varpi_{\nu}^{G'}(h)\circ \tcA{\lambda}{\nu}
\quad
\text{for all }
h \in G'.  
\]
Further, 
 $\A_{{\lambda},{\nu}} f$ is a holomorphic function of 
 $(\lambda,\nu)$ on the entire space ${\mathbb{C}}^2$
 for all $f \in C^{\infty}(S^n)$.  
\end{theorem}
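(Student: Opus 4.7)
The plan is to analyze the distribution kernel $\ka{\lambda}{\nu}$ directly as a meromorphic family of tempered distributions on $\mathbb{R}^n \simeq {\mathfrak {n}}_{\mathbb{R}}^-$ and to check that multiplication by $[\Gamma(\tfrac{\lambda+\nu-n+1}{2})\Gamma(\tfrac{\lambda-\nu}{2})]^{-1}$ kills every pole. By Proposition~\ref{prop:KTN} (and its line-bundle analogue in \cite{KS}), a $G'$-invariant tempered distribution on $\mathfrak{n}_{\mathbb{R}}^-$ determines uniquely a $G'$-invariant distribution on $X\times Y$ and hence, via the Schwartz kernel theorem, a continuous $G'$-intertwining operator, so this cancellation is exactly what is needed.

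The decisive step is to introduce polar coordinates $y=r\omega$ with $r=|y|$ and $\omega=(\omega',\omega_n)\in S^{n-1}$. Since $|y|^2=|x|^2+x_n^2=r^2$ and $x_n=r\omega_n$, the kernel factors algebraically as
\[
\ka{\lambda}{\nu}(y) \;=\; r^{\lambda-\nu-n}\,|\omega_n|^{\lambda+\nu-n},
\]
which decouples the analysis into an angular integral on $S^{n-1}$ depending only on $\lambda+\nu$ and a radial Mellin integral depending only on $\lambda-\nu$. The angular factor reduces, via $(\omega_n,\sigma)\in(-1,1)\times S^{n-2}$, to a Beta-type integral in $\omega_n$ with exponent $\lambda+\nu-n$; this extends meromorphically in $\lambda+\nu$ with simple poles at $\lambda+\nu\in\{n-1,n-3,n-5,\ldots\}$, exactly those of $\Gamma(\tfrac{\lambda+\nu-n+1}{2})$. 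The radial Mellin transform $\int_0^\infty r^{\lambda-\nu-1}I(r)\,dr$, with $I(r)$ the angular integral against the rescaled test function, extends meromorphically in $\lambda-\nu$ via Taylor expansion at $r=0$; by parity, odd-degree monomials in $y$ integrate to zero against $|\omega_n|^{\lambda+\nu-n}d\omega$, so only even Taylor coefficients contribute, producing simple poles at $\lambda-\nu\in\{0,-2,-4,\ldots\}$, exactly those of $\Gamma(\tfrac{\lambda-\nu}{2})$. Because the two pole families arise from independent mechanisms (radial versus angular), the product of gamma factors in the denominator cancels them simultaneously, even at intersection points where both conditions are met.

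The intertwining identity $\tcA{\lambda}{\nu}\circ\varpi_{\lambda}^G(h)=\varpi_{\nu}^{G'}(h)\circ\tcA{\lambda}{\nu}$ holds in the initial domain \eqref{eqn:lnopen} by $G'$-invariance of the kernel (Proposition~\ref{prop:KT}); since both sides depend holomorphically on $(\lambda,\nu)$ after normalization, the identity propagates to all of $\mathbb{C}^2$ by analytic continuation. Holomorphy of $(\lambda,\nu)\mapsto\tcA{\lambda}{\nu}f$ as a $C^{\infty}(S^{n-1})$-valued function is then read off locally via stereographic projection: on any Euclidean chart of $S^{n-1}$ the action of $\tcA{\lambda}{\nu}$ is represented by the normalized Euclidean kernel above, which is now an entire $\mathcal{S}'({\mathfrak{n}}_{\mathbb{R}}^-)$-valued function of $(\lambda,\nu)$, and $G'$-translates cover the target sphere.

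The main obstacle, as flagged in the excerpt, is the non-transversality of the two singular loci $\{x_n=0\}$ and $\{|x|^2+x_n^2=0\}$: the latter is contained in the former. This nested configuration blocks the direct application of the Atiyah and Bernstein--Sato theorems, which assume transversal singularities. The polar-coordinate device is the critical workaround: radial blow-up of the origin separates the nested pair into disjoint singular loci (the angular hypersurface $\{\omega_n=0\}\subset S^{n-1}$ and the isolated point $r=0$), reducing the meromorphic-continuation problem to two genuinely independent one-variable continuations in the parameters $\lambda+\nu$ and $\lambda-\nu$, and explaining structurally why precisely two gamma factors are needed for the normalization.
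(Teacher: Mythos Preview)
The paper does not itself prove Theorem~\ref{thm:meroA}; it is stated as a result imported from \cite{KS}, with only the remark that the Atiyah and Bernstein--Gelfand theorems do not apply directly because the singular loci $\{x_n=0\}$ and $\{0\}$ are nested rather than transversal. So there is no in-paper argument to compare your proposal against line by line.

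That said, your polar-coordinate blow-up is exactly the natural resolution of the non-transversality obstacle the paper highlights, and your identification of the two pole families with the two gamma factors is correct: the factorization $\ka{\lambda}{\nu}=r^{\lambda-\nu-n}|\omega_n|^{\lambda+\nu-n}$ genuinely decouples the problem into a one-variable $|t|^{s}$-continuation on the sphere (poles of $\Gamma(\tfrac{\lambda+\nu-n+1}{2})$) and a one-variable Mellin continuation in $r$ (poles of $\Gamma(\tfrac{\lambda-\nu}{2})$ after the parity observation, which is valid since $\omega\mapsto-\omega$ preserves both $|\omega_n|^{s}$ and $d\omega$). The claim that the two gamma factors kill the poles ``even at intersection points'' is also right, since after blow-up the kernel is a tensor product $r^{a}\otimes|\omega_n|^{b}$ in the variables $(r,\omega)$, so holomorphy in $(a,b)$ jointly follows from holomorphy in each separately.

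Two places in your sketch deserve more care. First, Proposition~\ref{prop:KTN} is stated only as an \emph{injection} from operators to tempered distributions on $\mathfrak{n}_{\mathbb{R}}^-$; you are invoking the inverse direction (that a $\Delta(P')$-invariant tempered distribution on the open cell lifts to a distribution section on the compact $G/P$ and hence to an operator). This is true under \eqref{eqn:PNPG} but requires the characterization of the image, which is in \cite{KS}, not in this paper. Second, your Mellin analysis treats only the $r\to 0$ end. Since the test functions $\iota_\lambda^\ast f$ coming from $f\in C^\infty(S^n)$ are not Schwartz but behave like $(1+r^2)^{-\lambda}$ at infinity, the $r\to\infty$ contribution also needs continuation (equivalently, a second chart on $S^n$ covering the missing point); this is where the compactness of $G/P$ enters and should be made explicit.
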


\begin{theorem}
[Residue formula]
\label{thm:AC}
If $\nu-\lambda =2l$
 with $l \in {\mathbb{N}}$
 then 
\[
\tcA{\lambda}{\nu}
=
\frac{(-1)^l l! \pi^{\frac{n-1}{2}}}{2^{2l} \Gamma(\nu)}
\tcC{\lambda}{\nu}.  
\]
\end{theorem}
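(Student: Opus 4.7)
The plan is a three-stage argument. First, I would exploit Theorem \ref{thm:meroA} to analyze the support of the distribution kernel of $\tcA{\lambda}{\nu}$ at $\nu-\lambda = 2l$; second, use the one-dimensionality of the space of differential symmetry breaking operators (Example \ref{ex:3}) to reduce the identity to the determination of a single scalar; and third, compute that scalar by applying the F-method of Theorem \ref{thm:Fcont}.

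For the first step, observe that in the Bruhat picture the kernel of $\cA{\lambda}{\nu}$ is the locally $L^1$ function $|y_n|^{\lambda+\nu-n}(|x-y|^2+y_n^2)^{-\nu}$ on the open domain \eqref{eqn:lnopen}, meromorphically continued in $(\lambda,\nu)$. A polar-coordinate estimate near the diagonal $(y,y_n)=(x,0)$ shows that the continuation develops a simple pole precisely along $\nu-\lambda\in 2\mathbb{N}$ and that the residue is supported at the diagonal, while away from the diagonal the kernel is jointly smooth and holomorphic in $(\lambda,\nu)$. The factor $1/\Gamma((\lambda-\nu)/2)$ in \eqref{eqn:Atilde} cancels this pole and kills the off-diagonal contribution, leaving a finite distribution supported on $\{y=x,\,y_n=0\}$. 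By the local characterization of differential operators used in \eqref{eqn:DC}, the normalized operator $\tcA{\lambda}{\nu}|_{\nu=\lambda+2l}$ is therefore an element of $\operatorname{Diff}_{G'}(C^{\infty}(X), C^{\infty}(Y))$, and by Example \ref{ex:3} this space is one-dimensional and spanned by $\tcC{\lambda}{\nu}$. Hence $\tcA{\lambda}{\nu} = c_l(\lambda,n)\,\tcC{\lambda}{\nu}$ for some meromorphic $c_l$.

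To pin down $c_l$, I would pass to the F-side via Theorem \ref{thm:Fcont} and Remark \ref{rem:Fcont}. Partial Fourier transform of $\ka{\lambda}{\nu}$ in the transverse variable $x\in\mathbb{R}^{n-1}$ introduces a Macdonald function, yielding, up to an explicit prefactor, $|x_n|^{\lambda-(n+1)/2}|\xi|^{\nu-(n-1)/2}K_{\nu-(n-1)/2}(|x_n|\,|\xi|)$; Fourier transforming in $x_n$ and homogenizing turns this into a Gauss hypergeometric expression ${}_2F_1(a,b;c;z)$ whose first parameter is $a=(\lambda-\nu)/2$. At $\nu-\lambda = 2l$ we have $a=-l$, so the series truncates to a polynomial of degree $l$ in $|\xi|^2/\xi_n^2$, exactly as signalled in the Kummer/polynomial-reduction entry of the table preceding the theorem. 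Matching this truncated sum against \eqref{eqn:Cmul2} identifies it with the two-variable Gegenbauer polynomial $\widetilde{C}_{2l}^{\lambda-(n-1)/2}(-|\xi|^2,\xi_n)$, which is the Fourier symbol of $\tcC{\lambda}{\nu}$.

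The main obstacle will be the last identification: reconciling all the $\Gamma$-function and powers-of-$2$ factors produced by the Bessel transform, the ${}_2F_1$ truncation, the prenormalization $1/(\Gamma((\lambda+\nu-n+1)/2)\Gamma((\lambda-\nu)/2))$, and the switch between the two Fourier conventions $\mathcal{F}_c$ and $\mathcal{F}_{\mathbb{R}}$ from Remark \ref{rem:45}. This is combinatorial bookkeeping rather than conceptual work; as a sanity check one may evaluate both operators on a radial Gaussian in the transverse variable, or test on the $K'$-finite vector corresponding to the spherical harmonic of degree $l$, where $\tcC{\lambda}{\nu}$ acts by an explicit Pochhammer-symbol constant. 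Either comparison should deliver the advertised scalar $(-1)^l l!\,\pi^{(n-1)/2}/(2^{2l}\Gamma(\nu))$.
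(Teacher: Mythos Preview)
Your plan is essentially the paper's own proof: compute $\mathcal{F}_c\KA{\lambda}{\nu}$ via a $K$-Bessel integral to obtain a ${}_2F_1$ (this is Proposition~\ref{prop:4.3}), specialize at $(\lambda-\nu)/2=-l$ so that the hypergeometric series terminates, and identify the result with the Gegenbauer symbol $\mathcal{F}_c\KC{\lambda}{\nu}$ of the differential operator. Two small points: your intermediate appeal to the one-dimensionality in Example~\ref{ex:3} is redundant once the two Fourier symbols are compared directly; and your support sketch is imprecise---the unnormalized kernel is \emph{not} smooth away from the diagonal (it is still singular along $y_n=0$, which is exactly why the second Gamma factor in \eqref{eqn:Atilde} is present), and the paper explicitly flags this meromorphic continuation as delicate because the two singular loci are non-transversal, importing the fact $\operatorname{Supp}\KA{\lambda}{\nu}\subset\{0\}$ from \cite{KS} rather than deriving it here.
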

\begin{remark}
\label{rem:Leven}
For $(\lambda,\nu)$ belonging to 
\[
L_{\operatorname{even}}
:=
\{(\lambda,\nu) \in {\mathbb{Z}}^2
: \lambda \le \nu \le 0,
 \lambda-\nu \equiv 0 \mod 2\}, 
\]
 the conformally covariant differential operator
 $\tcC \lambda \nu$ cannot be obtained
 as the residue
 of $\tcA \lambda\nu$.  
This discrete set $L_{\operatorname{even}}$
is exactly the zero-set of the symmetry breaking operators
$\tcA \lambda\nu$
and is the most interesting place
 of symmetry breaking
\cite{KS}.  
(We note that $L_{\operatorname{even}}$
 is of codimension two in ${\mathbb{C}}^2$!)
\end{remark}
The proof of Theorem \ref{thm:AC}
 in \cite{KS}
 is to use explicit formulae
 of the action
of $\tcA{\lambda}{\nu}$ and $\tcC{\lambda}{\nu}$
 on $K$-fixed vectors
 ({\it{spherical vectors}}).  
Instead,
we apply here the generalized F-method 
 and give an alternative proof
 of Theorem \ref{thm:AC},
 which is of more analytic nature
 and without using computations
 for specific $K$-types.

We write $\KA\lambda\nu$ and $\KC\lambda\nu$
 for the distribution kernels
 of the normalized symmetry breaking operator $\tcA\lambda\nu$
 and the conformally covariant differential operator
 $\tcC\lambda\nu$, 
respectively.  
For $(\lambda,\nu)$ belonging to the open domain 
\eqref{eqn:lnopen}, 
we have 
\begin{align*}
\KA{\lambda}{\nu} (x,x_n)
=&
\frac{1}{\Gamma(\frac{\lambda+\nu-n+1}{2})\Gamma(\frac{\lambda-\nu}{2})}
\ka\lambda\nu(x,x_n)
\\
=& \frac{1}{\Gamma(\frac{\lambda+\nu-n+1}{2})\Gamma(\frac{\lambda-\nu}{2})}
   |x_n|^{\lambda+\nu-n} (|x|^2 + x_n^2)^{-\nu}.  
\end{align*}
For $(\lambda,\nu) \in \mathbb{C}$
 such that $\nu-\lambda =2l$
 ($l \in {\mathbb{N}}$), 
 we have from \eqref{eqn:Cop}:
\begin{align}
\KC\lambda \nu
=&
\sum_{j=0}^l 
\frac
{2^{2l-2j} \prod_{i=1}^{l-j}(\frac{\lambda+\nu-n-1}{2}+i)}{j!(2l-2j)!}
(\Delta_{{\mathbb{R}}^{n-1}}^j \delta(x_1,\cdots,x_{n-1}))
\delta^{(2l-2j)}(x_n)
\notag
\\
=&
\widetilde C_{2l}^{\lambda-\frac{n-1}2}
(-\Delta_{{\mathbb{R}}^{n-1}}, \frac{\partial}{\partial x_n})
\,
\delta(x_1,\cdots,x_{n-1})
\delta(x_n).  
\label{eqn:KC}
\end{align}
\begin{proposition}
\label{prop:4.3}
{\rm{1)}}\enspace
The tempered distribution 
 ${\mathcal{F}}_{\mathbb{R}} \KA{\lambda}{\nu}
 \in {\mathcal{S}}'(\mathbb{R}^n)$
 is a real analytic function 
 (in particular, 
 locally integrable)
 in the open subset
\[
   \{(\xi,\xi_n)\in \mathbb{R}^{n-1}\oplus \mathbb{R}
  :
  |\xi|>|\xi_n|\}, 
\]
where $|\xi|=(\xi_1^2 + \cdots + \xi_{n-1}^2)^{\frac 1 2}$.  
By the analytic continuation
 (cf. \eqref{eqn:FRc}), 
 we have
 for $|\zeta|>|\zeta_n|$
\begin{equation}
\label{eqn:FcA}
({\mathcal{F}}_c \KA{\lambda}{\nu})(\zeta, \zeta_n)
=\frac{\pi^{\frac{n-1}{2}}e^{\frac{\pi i} 2 (\nu-\lambda)}|\zeta|^{\nu-\lambda}}{\Gamma(\nu)2^{\nu-\lambda}}
{}_2F_1
(\frac{\lambda-\nu}{2}, \frac{\lambda+\nu+1-n}{2};
\frac 1 2;-\frac{\zeta_n^2}{|\zeta|^2}).  
\end{equation}
\par\noindent
{\rm{2)}}\enspace
Suppose $\nu -\lambda = 2l$
 ($l \in {\mathbb{N}}$).  
Then 
\[
  ({\mathcal{F}}_c \KC{\lambda}{\nu})(\zeta,\zeta_n)
=\widetilde {C}_{2l}^{\lambda-\frac{n-1}{2}}
 (-|\zeta|^2, \zeta_n).  
\]
\end{proposition}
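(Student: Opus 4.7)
The plan is to compute each Fourier transform by direct integration: a Schwinger-parameter trick for Part 1 to separate the $x$ and $x_n$ variables, and the definition of the algebraic Fourier transform for Part 2.

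For Part 1, I would start from $(|x|^2+x_n^2)^{-\nu}=\Gamma(\nu)^{-1}\int_0^\infty t^{\nu-1}e^{-t(|x|^2+x_n^2)}\,dt$, valid for $\operatorname{Re}\nu>0$. Inserting this into the defining integral and applying Fubini (justified in the open set \eqref{eqn:lnopen}), the integral in $x$ becomes a Gaussian and yields $(\pi/t)^{(n-1)/2}e^{-|\xi|^2/(4t)}$. For the $x_n$-integral $\int_{\mathbb{R}}|x_n|^{\lambda+\nu-n}e^{-tx_n^2-ix_n\xi_n}\,dx_n$, the imaginary part of the exponential drops by parity; expanding $\cos(x_n\xi_n)$ in Taylor series and integrating term-by-term gives $\Gamma(\frac{\lambda+\nu-n+1}{2})\,t^{-(\lambda+\nu-n+1)/2}\,{}_1F_1(\frac{\lambda+\nu-n+1}{2};\frac{1}{2};-\xi_n^2/(4t))$. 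The remaining $t$-integral I would evaluate by the substitution $s=|\xi|^2/(4t)$: it becomes $\int_0^\infty s^{a-1}e^{-s}\,{}_1F_1(b;c;-ws)\,ds$ with $a=\frac{\lambda-\nu}{2}$, $b=\frac{\lambda+\nu-n+1}{2}$, $c=\frac{1}{2}$, $w=\xi_n^2/|\xi|^2$, and integrating term-by-term against $e^{-s}$ recognizes it as $\Gamma(a)\,{}_2F_1(a,b;c;-w)$. The two Gamma factors produced in these steps are precisely those cancelled by the normalization \eqref{eqn:Atilde}. Real-analyticity on $\{|\xi|>|\xi_n|\}$ follows from convergence of the ${}_2F_1$ series there. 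Passage from $\mathcal{F}_{\mathbb{R}}$ to $\mathcal{F}_c$ via \eqref{eqn:FRc} (setting $\xi=i\zeta$) produces the phase $|\xi|^{\nu-\lambda}\mapsto e^{i\pi(\nu-\lambda)/2}|\zeta|^{\nu-\lambda}$ on the principal branch, while $-\xi_n^2/|\xi|^2\mapsto -\zeta_n^2/|\zeta|^2$, reproducing \eqref{eqn:FcA}.

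For Part 2, I would apply $\mathcal{F}_c$ termwise to \eqref{eqn:KC} using $\widehat{\partial/\partial z_j}=-\zeta_j$ from Definition \ref{def:hat}. Thus $\mathcal{F}_c(\Delta_{\mathbb{R}^{n-1}}^j\delta(x_1,\ldots,x_{n-1}))=|\zeta|^{2j}$ and $\mathcal{F}_c(\delta^{(2l-2j)}(x_n))=(-\zeta_n)^{2l-2j}=\zeta_n^{2l-2j}$ because the exponent is even. Matching the resulting polynomial with the explicit expansion of $\widetilde{C}_{2l}^{\lambda-(n-1)/2}(-|\zeta|^2,\zeta_n)$ given by \eqref{eqn:Cmul2} is then immediate: the two sign factors $(-1)^j$ (one from \eqref{eqn:Cmul2}, one from $(-|\zeta|^2)^j$) cancel, leaving coefficients identical to those of \eqref{eqn:KC}.

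The main obstacle is Part 1: tracking the branch of $|\xi|^{\nu-\lambda}$ under $\xi=i\zeta$ to recover the phase $e^{i\pi(\nu-\lambda)/2}$ in \eqref{eqn:FcA} correctly, and ensuring absolute convergence at every interchange of sums and integrals within the open domain \eqref{eqn:lnopen}. Once the identity is established there, it extends to all $(\lambda,\nu)\in\mathbb{C}^2$ by the meromorphy asserted in Theorem \ref{thm:meroA}.
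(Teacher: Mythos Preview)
Your argument is correct and, for Part 1, takes a genuinely different route from the paper. The paper first Fourier transforms in the $x\in\mathbb{R}^{n-1}$ variables using the classical formula expressing $\mathcal{F}_{\mathbb{R}^{n-1}}\bigl[(|x|^2+x_n^2)^{-\nu}\bigr]$ in terms of the modified Bessel function $K_{-\nu+(n-1)/2}$, and then evaluates the remaining one-dimensional integral $\int_{\mathbb{R}}|t|^{\lambda-(n+1)/2}K_{-\nu+(n-1)/2}(|t\xi|)e^{-it\xi_n}\,dt$ by quoting the table entry \cite[6.699.4]{GR}, which outputs ${}_2F_1$ directly. Your Schwinger-parameter approach replaces the $K$-Bessel intermediary by a Gaussian one, passing through ${}_1F_1$ and then a Laplace transform of ${}_1F_1$ that yields ${}_2F_1$. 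The advantage of your route is that it is self-contained (only Gaussian integrals and power-series manipulations) and makes the cancellation of the two normalizing Gamma factors $\Gamma(\frac{\lambda+\nu-n+1}{2})$ and $\Gamma(\frac{\lambda-\nu}{2})$ completely transparent; the paper's route is shorter once one is willing to cite the Bessel identity.

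One caution on your Fubini step: after inserting the Schwinger parameter the triple integral over $(t,x,x_n)$ is \emph{not} absolutely convergent (integrating out $x$ and $x_n$ in modulus leaves $\int_0^\infty t^{(\operatorname{Re}(\nu-\lambda))/2-1}\,dt$, divergent at one endpoint), so the interchange cannot be justified pointwise in $(\xi,\xi_n)$ by \eqref{eqn:lnopen} alone. The standard fix---pairing against a Schwartz test function before interchanging, or restricting first to $\operatorname{Re}\nu>0$ together with a strip where additional decay is available and then continuing analytically---is routine, and you already flag convergence as the main obstacle; just be explicit that this is where it bites. Your Part 2 coincides with the paper's, which simply records the formula as ``clear from \eqref{eqn:KC} and the definition of $\mathcal{F}_c$.''
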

\begin{proof}
1) \enspace
We use the integration formula
\begin{equation*}
   {\mathcal{F}}_{{\mathbb{R}}^n}
     \ka \lambda\nu(\xi, \xi_n)
   =
   \frac{2^{-\nu+\frac {n+1} 2}\pi^{\frac {n-1} 2}}
        {\Gamma(\nu)|\xi|^{-\nu+\frac {n-1} 2}}
   \int_{{\mathbb{R}}}
   |t|^{\lambda-\frac {n+1} 2}
   K_{-\nu+\frac {n-1} 2}(|t\xi|)e^{-it \xi_n}d t, 
\end{equation*}
where $K_{\mu}(t)$ denotes the $K$-Bessel function.  
We then apply the following integration formula
(see \cite[6.699.4]{GR}): 
\begin{align*}
&\int_0^\infty x^\gamma K_\mu(ax) \cos(bx) dx
\\
&=
2^{\gamma-1} a^{-\gamma-1}
\Gamma(\frac{\mu+\gamma+1}{2})
\Gamma(\frac{1+\gamma-\mu}{2})
{}_{2}F_1(\frac{\mu+\gamma+1}{2}, \frac{1+\gamma-\mu}{2};
        \frac{1}{2}; - \frac{b^2}{a^2} )
\end{align*}
for $\operatorname{Re}(-\gamma\pm\mu) < 1$,
$\operatorname{Re} a > 0$, $b > 0$.
\par\noindent
2)\enspace
Clear from \eqref{eqn:KC} and the definition
 of ${\mathcal {F}}_c$.  
\end{proof}
\begin{proof}
[Proof of Theorem \ref{thm:AC}]
By using the following formula
of the Gegenbauer polynomial 
 of even degree
\[
C_{2l}^{\mu}(x)
=\frac{(-1)^l \Gamma(l+\mu)}{l ! \Gamma(\mu)}
 {}_{2}F_{1}(-l, l+\mu;\frac 1 2;x^2), 
\]
we get
\[
{\mathcal{F}}_c \KA{\lambda}{\nu}
=
\frac{(-1)^l l! \pi^{\frac{n-1}{2}}}
     {2^{2l}\Gamma(\nu)}
{\mathcal{F}}_c \KC{\lambda}{\nu}
\]
for $|\zeta| > |\zeta_n|$.  

In view that $\operatorname{Supp}\KA{\lambda}{\nu}
\subset \{0\}$
 for $\nu-\lambda \in 2 {\mathbb{N}}$
 (\cite{KS}), 
 both ${\mathcal{F}}_c \KA{\lambda}{\nu}$
 and ${\mathcal{F}}_c \KC{\lambda}{\nu}$
 are holomorphic functions
 on ${\mathbb{C}}^n$.  
Hence Theorem \ref{thm:AC} follows.  
\end{proof}
\begin{remark}
The assumption \eqref{eqn:PNPG}
 of Theorem \ref{thm:Fcont}
 is satisfied,
 and therefore,
 $\operatorname{Hom}_{G'}
(C^{\infty}(G/P,{\mathcal{L}}_{\lambda}),
C^{\infty}(G'/P',{\mathcal{L}}_{\nu}))$
 can be identified
 with the following subspace
 of the Schwartz distributions:
\[
\Sol(\lambda,\nu)^{\wedge}=
\{F \in {\mathcal{S}}'({\mathbb{R}}^n)
:
\text{$F$ satisfies the following three equations}
\}.  
\]
\begin{align}
&F(m \cdot)=F(\cdot)
\quad
\text{for $m \in O(n-1)\times O(1)$},
\notag
\\
&(
\sum_{i=1}^{n}
\zeta_i \frac {\partial}{\partial \zeta_i}
+\lambda-\nu)
)F=0,
\notag
\\
&
(\nu\frac{\partial}{\partial \zeta_j}
-\frac 1 2 \Delta_{\mathbb{R}^n}\zeta_j)
F=0
\quad
(1 \le j \le n-1).  
\label{eqn:fund}
\end{align}
The differential operators in the last equation 
are the {\it{fundamental differential operators}}
 on the light cone
 \cite[Chapter 2]{xkmanoAMS}
 (or Bessel operators in the context of Jordan algebras).  
The heart of the F-method
 is that this differential equation
 explains 
 why the Gauss hypergeometric functions
 (and the Gegenbauer polynomials
 as special cases)
 arise 
 in the formula
 of ${\mathcal{F}}_c \KA\lambda\nu$
 and ${\mathcal{F}}_c \KC\lambda\nu$
 in Proposition \ref{prop:4.3}.  
(In \cite{Juhl}, 
 the relation between $\tcC{\lambda}{\nu}$
 and the Gegenbauer polynomial was pointed out,
 but the proof was based on the comparison
 of coefficients
 determined by recurrence relations.)
\end{remark}
We recall
 that the Riesz potential 
\[
\KT{n-\lambda}{\lambda}(x,x_n)
:=\frac{1}{\Gamma(\lambda+\frac n 2)}
(x_1^2+ \cdots +x_n^2)^{\lambda}
\]
gives the normalized Knapp--Stein intertwining operator
 by
\[
\tcT{n-\lambda}{\lambda}
:C^{\infty}({\mathbb{R}}^n) \to C^{\infty}({\mathbb{R}}^n),
\quad
f\mapsto \KT{n-\lambda}{\lambda} \ast f.  
\]
Then $\tcT{n-\lambda}{\lambda}$ 
 depends holomorphically on $\lambda \in {\mathbb{C}}$
 and satisfies
\[
\varpi_{\lambda}^G(h)\circ \tcT{n-\lambda}{\lambda}
=
\tcT{n-\lambda}{\lambda}\circ\varpi_{n-\lambda}^G(h)
\quad
\text{for all }h \in G.  
\]
Here are the functional equations
 among the three operators:
our operators $\tcA {\lambda}{\nu}$, 
 the Knapp--Stein operators
 $\tcT {\nu}{m-\nu}$
 for $G'$
 and $\tcT {n-\lambda}\lambda$
for $G$.  

\begin{theorem}
[{\cite{KS}}]
\label{thm:TAATE}
Let $m=n-1$.
\begin{alignat}{2}
&
\T{\nu}{m-\nu} \circ \tcA{\lambda}{\nu}
&&= \frac{\pi^{\frac{m}{2}}}{\Gamma(\nu)} \tcA{\lambda}{m-\nu}.
\label{eqn:TAAnewE}
\\
&\tcA{\lambda}{\nu} \circ \T{n-\lambda}{\lambda}
&&= \frac{\pi^{\frac{n}{2}}}{\Gamma(\lambda)} \tcA{n-\lambda}{\nu}.
\label{eqn:ATTnewE}
\end{alignat}
\end{theorem}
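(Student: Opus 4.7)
The plan is to apply the F-method (Theorem~\ref{thm:Fcont}) and prove both functional equations as identities of Fourier-transformed distribution kernels on the open Bruhat cell $\mathfrak n_{\mathbb R}^-\simeq\mathbb R^n$. The hypothesis \eqref{eqn:PNPG} holds in the present setting, so by Proposition~\ref{prop:KTN} each symmetry breaking operator is uniquely determined by $\iota^{\ast}m^{\ast}K_T$, and each Knapp--Stein operator by its Riesz kernel. By Theorem~\ref{thm:meroA} together with the standard meromorphic continuation of Riesz potentials, both sides of \eqref{eqn:TAAnewE} and \eqref{eqn:ATTnewE} are holomorphic in $(\lambda,\nu)\in\mathbb C^2$; by the identity theorem, it therefore suffices to verify them in a non-empty open subset of convergence.

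In such an open subset, $\T{\nu}{m-\nu}\circ\tcA{\lambda}{\nu}$ corresponds to convolution of $\KA{\lambda}{\nu}$ with $\KT{\nu}{m-\nu}$ in the first $m=n-1$ coordinates only, while $\tcA{\lambda}{\nu}\circ\T{n-\lambda}{\lambda}$ corresponds to full convolution on $\mathbb R^n$. Applying $\mathcal F_c$ turns each composition into a pointwise product of Fourier-transformed kernels: the Fourier transform of the Riesz potential is an explicit power of $|\zeta|$ (respectively of $|\zeta|^2+\zeta_n^2$) times a gamma-factor normalization, while the Fourier transform of $\KA{\lambda}{\nu}$ is given by the hypergeometric expression \eqref{eqn:FcA}, namely $|\zeta|^{\nu-\lambda}$ times ${}_2F_1(\tfrac{\lambda-\nu}{2},\tfrac{\lambda+\nu+1-n}{2};\tfrac12;z)$ with $z:=-\zeta_n^2/|\zeta|^2$.

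Substituting these formulas, each functional equation becomes an explicit algebraic identity in $|\zeta|$ and $z$. For \eqref{eqn:TAAnewE} the substitution $\nu\mapsto m-\nu=n-1-\nu$ sends the pair $(\tfrac{\lambda-\nu}{2},\tfrac{\lambda+\nu+1-n}{2})$ to $(\tfrac{\lambda+\nu+1-n}{2},\tfrac{\lambda-\nu}{2})$, interchanging the first two arguments of ${}_2F_1$; the hypergeometric factor is then invariant under the symmetry ${}_2F_1(a,b;c;z)={}_2F_1(b,a;c;z)$, and the identity collapses to matching one power of $|\zeta|$ together with the overall constants. For \eqref{eqn:ATTnewE} the Knapp--Stein factor introduces an extra $(1-z)^{c-a-b}$, and the required hypergeometric identity is Euler's transformation
\begin{equation*}
{}_2F_1(a,b;c;z)=(1-z)^{c-a-b}\,{}_2F_1(c-a,c-b;c;z)
\end{equation*}
applied with $(a,b,c)=(\tfrac{\lambda-\nu}{2},\tfrac{\lambda+\nu+1-n}{2},\tfrac12)$, for which $c-a-b=\tfrac{n}{2}-\lambda$. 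This is precisely the ``Kummer relation'' entry in the author's correspondence table.

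The substantive work will be careful bookkeeping of the $\Gamma$-factors. The normalization $1/(\Gamma(\tfrac{\lambda+\nu-n+1}{2})\Gamma(\tfrac{\lambda-\nu}{2}))$ built into \eqref{eqn:Atilde}, the prefactor $\pi^{(n-1)/2}e^{\pi i(\nu-\lambda)/2}/(\Gamma(\nu)2^{\nu-\lambda})$ from \eqref{eqn:FcA}, and the beta-type gamma ratio produced by $\mathcal F_c$ of the Riesz kernel must combine to reproduce exactly the constants $\pi^{m/2}/\Gamma(\nu)$ and $\pi^{n/2}/\Gamma(\lambda)$ on the right-hand sides. Once the constants are matched at any one point of the open domain of convergence, analytic continuation yields both functional equations on all of $\mathbb C^2$. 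An alternative route, followed in \cite{KS}, tests the identities on $K$-spherical vectors using explicit eigenvalue formulas; the F-method route proposed here instead produces both identities in one uniform distributional step, with Kummer's quadratic relations playing the role that $K$-type computations play in the spherical-vector approach.
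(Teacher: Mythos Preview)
Your proposal follows exactly the route the paper itself sketches, which is explicitly labelled there as a ``Heuristic idea of a proof based on the F-method'' rather than a proof. The paper then names the gap: ``since the convolution (or the multiplication) of two Schwartz distributions are not well-defined in the usual sense in general, a rigorous proof in this direction requires some careful analysis when we deal with such functional equations for non-local operators.'' Your proposal does not close this gap. The assertion that it suffices to work ``in a non-empty open subset of convergence'' is not justified: the kernels $\KA{\lambda}{\nu}$ and $\KT{n-\lambda}{\lambda}$ are homogeneous tempered distributions with neither compact support nor decay at infinity, so the convolution integral need not converge for any open range of parameters, and the passage from composition of operators to convolution of kernels (and then to a pointwise product under $\mathcal F_c$) is precisely the step that must be argued. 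Moreover, the formula \eqref{eqn:FcA} is only established on the open cone $\{|\xi|>|\xi_n|\}$; equality of two tempered distributions on an open cone does not imply equality as distributions, and unlike in the proof of Theorem~\ref{thm:AC} (where both sides become entire because the support shrinks to $\{0\}$), here the distributions are genuinely non-local.

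The paper states that the actual proof, carried out in \cite{KS}, proceeds differently: generic multiplicity one for $\operatorname{Hom}_{G'}(\varpi_\lambda^G,\varpi_\nu^{G'})$ reduces each functional equation to determining a single scalar, which is then computed by evaluating both sides on the $K$-spherical vector. Your final paragraph mentions this as an alternative, but in fact it is the argument; the F-method/Kummer picture is offered in the paper as a heuristic that \emph{discovers} the identities rather than one that proves them. Your identification of which hypergeometric relation matches which equation (the trivial $a\leftrightarrow b$ symmetry for \eqref{eqn:TAAnewE}, Euler's transformation for \eqref{eqn:ATTnewE}) is sharp, but it does not by itself supply a rigorous proof.
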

\begin{proof}
[Heuristic idea of a proof based on the F-method]
First,
 we compute the Fourier transform
 of the Riesz potential,
 and obtain 
\begin{equation}
{\mathcal{F}}_c(\KT{n-\lambda}{\lambda})
(\zeta,\zeta_n)
= \frac{e^{\frac{\pi i}{2}(2 \lambda-n)}\pi^{\frac n 2}}{2^{2 \lambda-n}\Gamma(\lambda)}(|\zeta|^2+\zeta_n^2)^{\lambda-\frac n 2}.  
\label{eqn:FcK}
\end{equation}
Combining \eqref{eqn:FcA} and \eqref{eqn:FcK}, 
we would have the following identity
 of holomorphic functions
 on $\{(\zeta, \zeta_n)\in {\mathbb{C}}^n:|\zeta|>|\zeta_n|\}$
 as analytic continuation:
\begin{align*}
&{\mathcal{F}}_c(\KA{\lambda}{\nu}\ast \KT{n-\lambda}{\lambda})
\\
=&
{\mathcal{F}}_c(\KA{\lambda}{\nu})
{\mathcal{F}}_c(\KT{n-\lambda}{\lambda})
\\
=&
\frac{\pi^{n-\frac 1 2} 
e^{\frac{\pi i}{2}(\lambda+\nu-n)}
(|\zeta|^2+\zeta_n^2)^{\lambda-\frac n2}
|\zeta|^{\nu-\lambda}}
{\Gamma(\lambda)\Gamma(\nu)2^{\lambda+\nu-n}}
{}_2F_1
(\frac{\lambda-\nu}{2}, \frac{\lambda+\nu+1-n}{2};
\frac 1 2;-\frac{\zeta_n^2}{|\zeta|^2}).  
\end{align*}
Then the desired functional equation \eqref{eqn:TAAnewE} would 
 be reduced to Kummer's relation on Gauss hypergeometric functions:
\[
F(\alpha,\beta;\gamma;z)
=
(1-z)^{\gamma-\alpha-\beta} F(\gamma-\alpha, \gamma-\beta; \gamma; z).  
\]
The identity \eqref{eqn:ATTnewE} is similar and simpler.  
\end{proof}
An advantage of the F-method indicated as above
 is that we can discover the functional identities
 such as \eqref{eqn:TAAnewE} and \eqref{eqn:ATTnewE}
as a disguise
 of a very simple and classical identity
 (i.e. Kummer's relation), 
 and the proof does not depend heavily on representation theory.
On the other hand, since the convolution (or the multiplication)
 of two Schwartz distributions are not well-defined in the usual sense
 in general,
 a rigorous proof
 in this direction requires some careful analysis when we deal with 
 such functional equations for non-local operators.   
(For local operators, we do not face with these analytic difficulties.
In this case, we have used the F-method in \cite{KOSS}
 to prove functional identities for differential operators,
 e.g. factorization identities in \cite{Juhl}.)

In \cite{KS}, we take a completely different approach
 based on the uniqueness of symmetry breaking operators
 for generic parameters
 (cf.~\cite{GP,xsunzhu})
 and the evaluation of spherical vectors
 applied by symmetry breaking operators
 for the proof of Theorem \ref{thm:TAATE} and its variants.

\subsection*{Acknowledgments}
%
The author is grateful to J.-L.~Clerc, 
G.~Mano, T.~Matsuki,
 B.~\O rsted, T. Oshima,
 M.~Pevzner, B.~Speh, P.~Somberg, 
V.~Souceck for their collaboration on the papers which are mentioned
 in this article.
Parts of the results were delivered
at the conference,
\textsl{the Interaction of Geometry and Representation Theory: Exploring New Frontiers}
in honor of Michael Eastwood's 60th birthday,
organized by 
Andreas Cap,  Alan Carey,  A. Rod Gover,  C. Robin Graham, and Jan Slovak,
at ESI, Vienna, 10--14 September 2012. 
Thanks are also due to referees for reading carefully the manuscript.
This work is partially
 supported by
the Institut des Hautes \'Etudes Scientifiques (Bures-sur-Yvette)
and Grant-in-Aid for Scientific Research 
 (A) (25247006) JSPS.

\end{document}